\providecommand\@dotsep{5}
\def\listtodoname{List of Todos}
\def\listoftodos{\@starttoc{tdo}\listtodoname}
\numberwithin{equation}{section}
\newtheorem{theorem}{Theorem}[section]
\newtheorem{proposition}[theorem]{Proposition}
\newtheorem{lemma}[theorem]{Lemma}
\newtheorem{corollary}[theorem]{Corollary}
\newtheorem{remark}{Remark}
\newcommand\restr[2]{{
  \left.\kern-\nulldelimiterspace 
  #1 
  \vphantom{\big|} 
  \right|_{#2} 
  }}
\title[]{On the extreme value of the Nehari manifold method for a class of Schr\"{o}dinger equations with indefinite weight functions }
\author[J. C. de Albuquerque]{Jos\'e Carlos de Albuquerque$^{1}$}
\author[K. Silva]{Kaye Silva$^{2}$}
\address[J. C. de Albuquerque]{\newline\indent
	Departamento de Matem\'atica.   
	\newline\indent 
Universidade Federal de Pernambuco,   
	\newline\indent 
	50670-901 Recife-PE, Brazil}
\email{\href{mailto:joserre@gmail.com}{joserre@gmail.com}, \href{mailto:jc@dmat.ufpe.br}{jc@dmat.ufpe.br}}
\address[K. Silva]{\newline\indent
	Instituto de Matem\'atica e Estat\'istica.   
	\newline\indent 
	Universidade Federal de Goi\'as,
	\newline\indent
	74001-970, Goi\^ania, GO, Brazil}
\email{\href{mailto:kayeoliveira@hotmail.com}{kayeoliveira@hotmail.com}, \href{mailto:kaye_0liveira@ufg.br}{kaye\_0liveira@ufg.br}}
\thanks{The first author was partially supported by Capes/Brazil. The second author was partially supported by CNPq/Brazil under Grant [408604/2018-2]. 
}
\subjclass[2010]{Primary  
35J62, 
35J92, 
35Q55, 
}
\keywords{Schr\"{o}dinger equation, Nehari manifold, Extreme value, Indefinite nonlinearities}
\begin{document}
	
\maketitle

\vspace{-0,5cm}

\hspace{2cm} $^{1}${\small Departamento de Matem\'atica, Universidade Federal de Pernambuco,}\\
\vspace{-0,5cm}

\hspace{2,3cm}{\small Recife-PE, Brazil}

\hspace{2cm} $^{2}${\small Instituto de Matem\'atica e Estat\'istica, Universidade Federal de Goi\'as,}\\
\vspace{-0,5cm}

\hspace{2,1cm} {\small Goi\^ania-GO, Brazil}

\begin{abstract}
	In this work we are concerned with the following class of equations
	 \[
	  -\Delta_p u -\lambda h(x)|u|^{p-2}u=f(x)|u|^{\gamma-2}u, \quad \mbox{in } \mathbb{R}^N,
	 \]
	involving indefinite weight functions. The existence of solution may depend on the parameter $\lambda$. We analyze the extreme value $\lambda^{*}$ and study its relation with the Nehari manifold. Our goal is to establish the existence of two solutions when $\lambda>\lambda^{*}$. This work extends and complements the results obtained by J. Chabrowski and D.G. Costa [Comm. Partial Differential Equations 33 (2008), 1368--1394] 
\end{abstract}

\section{Introduction}

In this work we study the following class of problems

\begin{equation}\label{p}
\left\{
\begin{aligned}
-\Delta_p u -\lambda &h(x)|u|^{p-2}u=f(x)|u|^{\gamma-2}u  &&\mbox{in}\ \ \mathbb{R}^N, \\
&u\in D^{1,p}(\mathbb{R}^N)\cap L^\gamma(\mathbb{R}^N),  &&
\end{aligned}
\right.
\end{equation}
where $p\in (1,\infty)$, $\gamma\in (p,p^*)$, $\lambda$ is a real parameter, $h\in L^{\frac{N}{p}}(\mathbb{R}^N)\cap L^\infty(\mathbb{R}^N)$, $f\in L^\infty(\mathbb{R}^N)$ and $\Delta_p$ is the $p$-Laplacian operator. Moreover, $D^{1,p}(\mathbb{R}^N)$ is the closure of $C_0^\infty(\mathbb{R}^N)$ with respect to the norm $\|u\|_{D^{1,p}(\mathbb{R}^{N})}^{p}=\int|\nabla u|^p$. We denote $E\equiv D^{1,p}(\mathbb{R}^N)\cap L^\gamma(\mathbb{R}^N)$ and equip $E$ with the norm
\begin{equation*}
\|u\|= \left[\int |\nabla u|^p+\left(\int |u|^{\gamma}\right)^{\frac{p}{\gamma}}\right]^{\frac{1}{p}}.
\end{equation*}
Consider the eigenvalue problem 

\begin{equation}\label{pe}
\left\{
\begin{aligned}
-\Delta_p u &= \lambda h|u|^{p-2}u
&\mbox{in}\ \ \Omega \\
&u\in D^{1,p}(\Omega) 
\end{aligned}
\right.,
\end{equation}
where $\Omega\subset \mathbb{R}^N$ is an open set. We denote the first eigenvalue of \eqref{pe}, when it exists, by $\lambda_1$. 


There is a large literature concerning existence results for several classes of problems related to \eqref{p} and we refer to the readers, for example, \cite{ouyang,alamadel,tarantello,bere,bere2,costa,costa2,yavkay,tarantello2,cingo,giaco} and references therein. In the work of Ouyang \cite{ouyang} the author has studied the class of problems 
\begin{equation}\label{pat}
\left\{
\begin{aligned}
-\Delta_p u -\lambda &h(x)|u|^{p-2}u=f(x)g(u)  &&\mbox{in}\ \ \Omega, \\
&u\in W_0^{1,p}(\Omega),  &&
\end{aligned}
\right.
\end{equation}	
in the particular case where $p=2$, $\Omega\subset\mathbb{R}^{N}$ is a bounded domain, $h(x)=1$ and $g(u)=|u|^{\gamma-2}u$. Ouyang proved the existence of $\lambda_b>0$ such that problem \eqref{pat} has at least two positive solutions whenever $\lambda\in(\lambda_1,\lambda_b)$, at least one positive solution for $\lambda=\lambda_b$ and does not admit positive solutions for $\lambda>\lambda_b$. Later on in Alama-Tarantello \cite{tarantello}, that result was generalized by considering more general hypothesis on $g$ and $h(x)=1$. Precisely, it was introduced the notion of ``thickness". When $\Omega=\mathbb{R}^N$ and $p=2$, problem \eqref{pat} was studied in Costa-Tehrani \cite{costa2} and they proved the existence of two solutions whenever $\lambda$ is close to $\lambda_1$. 

In Il'yasov-Silva \cite{yavkay}, problem \eqref{pat} was studied when $\Omega$ bounded and $g(u)=|u|^{\gamma-2}u$. By following the ideas introduced in \cite{yav2}, the authors were able to provide existence of solutions only by variational methods, by introducing the so-called extreme parameter $\lambda^*$ and $\varepsilon>0$ for which problem \eqref{pat} has at least two positive solutions for $\lambda\in(\lambda_1,\lambda^*+\varepsilon)$. When $\Omega=\mathbb{R}^N$ and $g(u)=|u|^{\gamma-2}u$ problem \eqref{pat} was studied in \cite{costa} where the authors proved the existence of two positive solutions for $\lambda$ close to $\lambda_1$. 

 Motivated by \cite{yav2} and \cite{yavkay}, our main goal is to extend and complement the results of \cite{costa}, by showing existence of two positive solutions for $\lambda\in(\lambda_1,\lambda^*+\varepsilon)$. As will become clear in the work, there is a substantial difference when one tries to find solutions in $(\lambda_1,\lambda^*)$ or $\lambda\ge \lambda^*$. In fact, the main technique employed in \cite{costa} to find solutions when $\lambda$ is close to $\lambda_1$, can be used to provide existence of solutions when $\lambda\in(\lambda_1,\lambda^*)$, however, it does not apply to the case $\lambda\ge\lambda^*$. In order to solve this problem, we borrow some ideas introduced in \cite{yavkay}.

Let us introduce our main assumptions. For a function $g:\mathbb{R}^N\to \mathbb{R}$, denote $\Omega_g^+=\{x\in \mathbb{R}^N :g(x)>0\}$, $\Omega_g^-=\{x\in \mathbb{R}^N :g(x)<0\}$ and $\Omega_g^0=\{x\in \mathbb{R}^N :g(x)=0\}$. For a bounded open set $U\subset \mathbb{R}^N$ we denote by $(\lambda_1(U),\phi_1(U))$ the first eigenpair associated with the operator $-\Delta_p$ over $U$, when it exists, for example when $U$ is a bounded regular domain.  We assume the following hypotheses on $h,f$:

\begin{description}
	\item[($F_1$)] $\Omega_f^+,\Omega_f^-$ are non empty sets with positive measure;
	\item[($F_2$)] if $\operatorname{int}(\Omega_f^0)\neq \emptyset$ then $\lambda_1(\operatorname{int}(\Omega_f^+\cup\Omega_f^0))<\lambda_1(\operatorname{int}(\Omega_f^0))$;
	\item[($F_\infty$)] $\lim_{|x|\to \infty}f(x)=f(\infty)<0$;
	\item[($F_{\phi_1}$)] $\int f|\phi_1|^\gamma<0$.
\end{description} 
\begin{remark} \noindent 
	\begin{enumerate}
		\item Hypotheses ($F_1$), ($F_\infty$) and ($F_{\phi_1}$) were all used in \cite{costa}.
		\item Hypothesis ($F_\infty$) implies that $\Omega_f^+,\Omega_f^0$ are bounded sets and hence the eingevalues that appear in ($F_2$) are well defined.
		\item Hypothesis ($F_2$) is the so-called ``thickness" hypothesis (in a more quantitative form). We need it here to show existence of solutions when $\lambda\ge\lambda^*$.
	\end{enumerate}
\end{remark}

In order to study the existence of solutions for problem~\eqref{p} we use an approach based on Nehari manifold method, see, e.g., \cite{drabek,yav1,yav2}. Associated to Problem \eqref{p} we have the so-called \textit{extreme value of the Nehari manifold method} which is defined by the following minimization problem 
\begin{equation*}
\lambda^*=\inf\left\{\frac{\int |\nabla u|^p}{\int h |u|^p}: \int f|u|^\gamma\ge 0,\ \int h|u|^p>0 \right\},
\end{equation*}
see \cite{yav2,ouyang}. The extreme value $\lambda^{*}$ defines a threshold for the applicability of the Nehari manifold method, in the sense that if $\lambda<\lambda^{*}$ then the Nehari set is a $C^{1}$-manifold and standard variational techniques may be applied in order to find critical points. In \cite{costa}, in order to show existence of two positive solutions for \eqref{p} when $\lambda$ is close to $\lambda_1$, the authors have used that the Nehari set is in fact a manifold which is not topologically connected. Hence a minimization argument in different components may be applied in order to find two positive solutions for Problem~\eqref{p}. Since by definition, whenever $\lambda\in(\lambda_{1},\lambda^{*})$ we have that the Nehari set is a manifold, it follows that the method used in \cite{costa} does work for all $\lambda\in (\lambda_1,\lambda^*)$. A natural question arises: Can the same result be obtained when $\lambda\geq\lambda^{*}$? In \cite{yavkay}, the authors have answered this question when the problem is defined on a bounded set. Precisely, they have proved that there exist at least two positive solutions for problem \eqref{pat}, provided that $\lambda\in(\lambda^{*},\Lambda)$, for some $\Lambda>\lambda^{*}$. For this purpose, the authors have used a variant of Nehari manifold method. Our main goal here is to answer the question when $\Omega=\mathbb{R}^{N}$. Due to the lack os compactness, it is necessary to introduce new techniques in the method and assumption $(F_{\infty})$ plays a very important role in the proof.

Now we are ready to state our main result.

\begin{theorem}\label{thm} 
	Suppose that $(F_1)$, $(F_2)$, $(F_{\infty})$ and $(F_{\phi_{1}})$ hold. Then, $\lambda^*>\lambda_1$ and there exists $\varepsilon>0$ such that problem \eqref{p} has at least two positive solutions for all $\lambda\in (\lambda_1,\lambda^*+\varepsilon)$.
\end{theorem}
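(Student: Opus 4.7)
The overall plan is to carry out the Nehari manifold analysis of \cite{yavkay} in the non-compact setting of $\mathbb{R}^N$. Write the energy as
\[
I_\lambda(u)=\frac{1}{p}\int|\nabla u|^p-\frac{\lambda}{p}\int h|u|^p-\frac{1}{\gamma}\int f|u|^\gamma,
\]
and for $u\in E\setminus\{0\}$ consider the fiber map $\varphi_u(t)=I_\lambda(tu)$; critical points of $\varphi_u$ at $t=1$ are exactly the members of the Nehari set $\mathcal{N}_\lambda$. The first step is to prove $\lambda^*>\lambda_1$. Since the infimum defining $\lambda^*$ is taken over a subset of the admissible domain for the Rayleigh quotient characterizing $\lambda_1$, we trivially get $\lambda^*\ge\lambda_1$; strict inequality follows because $\phi_1$ violates $\int f|u|^\gamma\ge 0$ by $(F_{\phi_1})$. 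More precisely, a minimizing sequence $(u_n)$ for $\lambda^*$, normalized so that $\int h|u_n|^p=1$, would have to concentrate on $\phi_1$ if $\lambda^*=\lambda_1$, and passing the constraint $\int f|u_n|^\gamma\ge 0$ to the limit would contradict $(F_{\phi_1})$; the possible escape of mass to infinity that could block this passage is ruled out by $(F_\infty)$.

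For $\lambda\in(\lambda_1,\lambda^*)$ the method of \cite{yav2,yavkay} applies straightforwardly: the Nehari set splits cleanly into $\mathcal{N}_\lambda^+\cup\mathcal{N}_\lambda^-$ according to the sign of $\varphi_u''(1)$, and both pieces are $C^1$-submanifolds of $E$. Two candidate solutions are produced as minimizers of $I_\lambda$ on $\mathcal{N}_\lambda^+$ and on $\mathcal{N}_\lambda^-$, yielding a ``local minimum'' type and a ``mountain-pass'' type critical point of the unrestricted functional. This is essentially the strategy of \cite{costa}, which can be pushed all the way up to $\lambda^*$.

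The delicate part, and the main obstacle, is the range $\lambda\in[\lambda^*,\lambda^*+\varepsilon)$, where $\mathcal{N}_\lambda^0$ becomes nontrivial and $\mathcal{N}_\lambda$ ceases to be a manifold in the usual sense. Here I would follow \cite{yavkay} and work with a perturbed splitting: the branch $\mathcal{N}_\lambda^+$ and its associated minimizer persist through $\lambda^*$ by an implicit-function-type continuation, producing the first solution for $\lambda$ slightly beyond $\lambda^*$. For the second solution I would replace the direct minimization on $\mathcal{N}_\lambda^-$ by a minimax over a suitably enlarged family, and use the quantitative form of the thickness hypothesis $(F_2)$, namely $\lambda_1(\operatorname{int}(\Omega_f^+\cup\Omega_f^0))<\lambda_1(\operatorname{int}(\Omega_f^0))$, to keep the minimax level strictly separated from the local minimum level and from zero, hence strictly below the energy of any hypothetical escaping profile.

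The unifying technical issue is the loss of compactness on $\mathbb{R}^N$. This is controlled by $(F_\infty)$: any Palais--Smale sequence whose mass escapes to infinity must yield, after translation, a nontrivial solution of the limit problem with constant-sign nonlinearity of sign $f(\infty)<0$, but that problem admits no nontrivial solution in $D^{1,p}\cap L^\gamma$, ruling out the escaping scenario and giving the strong convergence needed to produce critical points on each branch. Positivity of the two solutions is then obtained by the standard replacement $u\mapsto|u|$ (both $I_\lambda$ and the Nehari constraint being even) followed by the strong maximum principle for $-\Delta_p$ applied to the Euler--Lagrange equation, where the lower-order term is moved to the right-hand side to preserve sign.
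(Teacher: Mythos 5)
Your outline follows the same overall route as the paper (the extreme value $\lambda^*$, minimization on the Nehari branches for $\lambda<\lambda^*$, continuation of the $\mathcal{N}_\lambda^+$-type solution past $\lambda^*$, a minimax for the second solution, and $(F_\infty)$ to restore compactness), but the two steps on which the theorem actually hinges are only named, not supplied, and the hypothesis $(F_2)$ is attached to the wrong one. The mechanism that lets the first solution survive into $[\lambda^*,\lambda^*+\varepsilon)$ is not an ``implicit-function-type continuation'': no nondegeneracy is available, and $\overline{\mathcal{N}_{\lambda^*}^+}$ meets $\mathcal{N}_{\lambda^*}^0\neq\emptyset$. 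What is needed is (a) that the infimum defining $\lambda^*$ is attained by some $u^*$ with $\int f|u^*|^\gamma=0$ and that a rescaling of $u^*$ solves problem \eqref{p} at $\lambda=\lambda^*$ via a Lagrange multiplier argument --- and this is exactly where $(F_2)$ enters, to exclude $\supp(u^*)\subset\Omega_f^0$ and hence make the derivative of the constraint map $u\mapsto(\int h|u|^p,\int f|u|^\gamma)$ surjective at $u^*$; and (b) a separation lemma asserting that every minimizer $v$ of $\hat{J}_{\lambda^*}^+$ satisfies $\int|\nabla v|^p-\mu_0\int h|v|^p<0$ for some fixed $\mu_0<\lambda^*$, so that the minimization localized to $\Theta_{\mu_0}^+\cap S$ still has \emph{interior} minimizers for $\lambda$ slightly above $\lambda^*$. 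You instead invoke $(F_2)$ only to ``separate the minimax level,'' which is not its role; without (a) and (b) the continuation is a name for the thing to be proved. Note also that even the endpoint $\lambda=\lambda^*$ already requires a limiting argument $\lambda_n\uparrow\lambda^*$, since direct minimization there is obstructed by $\mathcal{N}_{\lambda^*}^0$.

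The compactness discussion also has a genuine gap: you treat only the escape-of-mass alternative and tacitly assume Palais--Smale sequences are bounded. With indefinite $f$ there is no Ambrosetti--Rabinowitz control, and boundedness is the harder half; the paper obtains it by normalizing $v_n=u_n/\|u_n\|$, showing the weak limit $v\neq 0$ must be supported in $\Omega_f^0$ and satisfy $-\Delta_p v=\lambda h|v|^{p-2}v$ there, and excluding this by choosing a ball $B\subset\operatorname{int}(\Omega_f^0)$ on which $\lambda$ is not an eigenvalue. Your translation argument for escaping mass is moreover fragile: the translated weak limit can be zero (the vanishing case), and $h\in L^{N/p}\cap L^\infty$ need not decay pointwise along the translations, so the ``limit problem at infinity'' is not cleanly identified. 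The paper's bookkeeping with $\alpha_\infty$, $\beta_\infty$ and the identity $\beta_\infty=\alpha_\infty f(\infty)$, which forces $\alpha_\infty=\beta_\infty=0$ because $f(\infty)<0$, is the argument that should replace it.
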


\begin{remark} If we define 
	
	\begin{equation*}
	\lambda^{-*}=\sup\left\{\frac{\int |\nabla u|^p}{\int h |u|^p}: \int f|u|^\gamma=0,\ \int h|u|^p<0 \right\},
	\end{equation*}
	then a similar theorem can be proved in the case that $\lambda^{-*}$ has a minimizer and $\lambda<0$.
\end{remark}
%

The paper is organized as follows: In the forthcoming Section we introduce and study the Nehari sets associated to our problem. In Section~\ref{s3}, we show the existence of two positive solutions to Problem~\eqref{p}, for $\lambda\in(\lambda_{1},\lambda^{*}]$. In Section~\ref{s4}, we prove the existence of one positive solution to Problem~\eqref{p} when $\lambda>\lambda^{*}$. In Section~\ref{s5}, we use a Mountain Pass type argument to obtain the second positive solution when $\lambda>\lambda^{*}$, which concludes the proof of Theorem~\ref{thm}. Throughout the paper, we assume that all the hypotheses of Theorem~\ref{thm} hold.

\section{Finer Estimates Over the Nehari Sets}

In this Section we study the so called Nehari sets. In what follows, we use the notation
 \[
  H_{\lambda}(u)=\int|\nabla u|^{p}-\lambda\int h|u|^{p} \quad \mbox{and} \quad F(u)=\int f|u|^{\gamma}, \quad u\in E.
 \]
For each $\lambda\in \mathbb{R}$, the energy functional associated with problem \eqref{p} is given by
\begin{equation*}
\Phi_{\lambda}(u)=\frac{1}{p}H_{\lambda}(u)-\frac{1}{\gamma}F(u),\quad u\in E.
\end{equation*}
We say that $u\in E$ is a solution to \eqref{p} if $u$ is a critical point of the $C^1$ functional $\Phi_\lambda$. The Nehari set associated to $\Phi_\lambda$ is defined as 
\begin{equation*}
\mathcal{N}_{\lambda}:=\left\{u\in E\backslash\{0 \}: \Phi'_{\lambda}(u)(u)=0 \right\}.
\end{equation*}
Observe that if $u\in E$ is a nontrivial critical point of $\Phi_\lambda$, then $u\in \mathcal{N}_\lambda$. We split $\mathcal{N}_{\lambda}$ into three disjoint subsets:
\begin{equation*}
\mathcal{N}_{\lambda}^{+}:=\left\{u\in\mathcal{N}_{\lambda}:\Phi''_{\lambda}(u)(u,u)>0 \right\}=\{u\in\mathcal{N}_{\lambda}:H_\lambda(u)<0,\ F(u)<0\},
\end{equation*}
\begin{equation*}
\mathcal{N}_{\lambda}^{-}:=\left\{u\in\mathcal{N}_{\lambda}:\Phi''_{\lambda}(u)(u,u)<0 \right\}=\{u\in\mathcal{N}_{\lambda}:H_\lambda(u)>0,\ F(u)>0\},
\end{equation*}
 \begin{equation*}
\mathcal{N}_{\lambda}^{0}:=\left\{u\in\mathcal{N}_{\lambda}:\Phi''_{\lambda}(u)(u,u)=0 \right\}=\{u\in E\setminus\{0\}:H_\lambda(u)=0,\ F(u)=0\}.
\end{equation*}  
By using the Implicit Function Theorem, one can easily prove the following result:
\begin{lemma} If $\mathcal{N}_{\lambda}^{+},\mathcal{N}_{\lambda}^{-}$ are non-empty, then $\mathcal{N}_{\lambda}^{+},\mathcal{N}_{\lambda}^{-}$ are $C^1$ manifolds of codimension $1$ in $E$. 
\end{lemma}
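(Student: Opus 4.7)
The plan is to apply the regular value theorem (a version of the implicit function theorem) to an appropriate constraint functional. Define $\psi_\lambda:E\to\mathbb{R}$ by
\[
\psi_\lambda(u):=\Phi'_\lambda(u)(u)=H_\lambda(u)-F(u).
\]
Under the standing hypotheses ($h\in L^{N/p}(\mathbb{R}^N)\cap L^\infty(\mathbb{R}^N)$ and $f\in L^\infty(\mathbb{R}^N)$), the functional $\Phi_\lambda$ is at least $C^2$ on $E$, so $\psi_\lambda$ is $C^1$. By construction $\mathcal{N}_\lambda^+\cup\mathcal{N}_\lambda^-\subset\psi_\lambda^{-1}(0)\setminus\{0\}$.

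Next I would exhibit the non-degeneracy of the constraint. Using that $H_\lambda$ is $p$-homogeneous and $F$ is $\gamma$-homogeneous, a direct differentiation of $t\mapsto \psi_\lambda((1+t)u)$ at $t=0$ yields
\[
\psi'_\lambda(u)(u)=pH_\lambda(u)-\gamma F(u),
\]
and on $\mathcal{N}_\lambda$ (where $H_\lambda(u)=F(u)$) this simplifies to $(p-\gamma)H_\lambda(u)$. Since $p<\gamma$, the sign condition defining $\mathcal{N}_\lambda^+$ (namely $H_\lambda(u)<0$) gives $\psi'_\lambda(u)(u)>0$, while the one defining $\mathcal{N}_\lambda^-$ gives $\psi'_\lambda(u)(u)<0$; in either case $\psi'_\lambda(u)\neq 0$ in $E^*$. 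Hence $0$ is a regular value of $\psi_\lambda$ at every point of $\mathcal{N}_\lambda^+\cup\mathcal{N}_\lambda^-$.

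By the implicit function theorem, $\psi_\lambda^{-1}(0)$ is, in a neighbourhood of every such point, a $C^1$ submanifold of $E$ of codimension $1$. Since
\[
\mathcal{N}_\lambda^+=\{u\in\psi_\lambda^{-1}(0)\setminus\{0\}:H_\lambda(u)<0\}\quad\text{and}\quad\mathcal{N}_\lambda^-=\{u\in\psi_\lambda^{-1}(0)\setminus\{0\}:H_\lambda(u)>0\}
\]
are cut out by open conditions on the continuous functional $H_\lambda$, they are relatively open in $\psi_\lambda^{-1}(0)\setminus\{0\}$, and therefore each inherits the $C^1$-manifold structure of codimension $1$.

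There is no serious obstacle in this argument; the only item that genuinely needs verification is the $C^1$-regularity of $u\mapsto\int h|u|^p$ and $u\mapsto\int f|u|^\gamma$ on $E$, which follows routinely from the integrability assumptions on $h$ and $f$ combined with the Sobolev embedding $D^{1,p}(\mathbb{R}^N)\hookrightarrow L^{p^*}(\mathbb{R}^N)$ and the continuous inclusion $E\hookrightarrow L^\gamma(\mathbb{R}^N)$.
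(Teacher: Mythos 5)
Your argument is correct and is exactly the implicit-function-theorem route the paper has in mind (the paper states the lemma without proof, remarking only that it follows from the IFT): you show $0$ is a regular value of $\psi_\lambda=H_\lambda-F$ at every point of $\mathcal{N}_\lambda^\pm$ via the homogeneity identity $\psi'_\lambda(u)(u)=(p-\gamma)H_\lambda(u)\neq0$, and then observe that $\mathcal{N}_\lambda^\pm$ are relatively open in the level set. One small caveat: for $1<p<2$ the claim that $\Phi_\lambda$ is $C^2$ is not justified, but your proof never actually uses it --- only the $C^1$-regularity of $\psi_\lambda$ and the one-dimensional differentiation along rays, so the argument stands as written.
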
 
The main point in defining the Nehari manifolds $\mathcal{N}_{\lambda}^{+},\mathcal{N}_{\lambda}^{-}$ is that $\mathcal{N}_\lambda^+\cup\mathcal{N}_\lambda^-$ is a natural constraint to our problem as we see in the next proposition. 
\begin{proposition}\label{critialpoints}
	If $u\in \mathcal{N}_\lambda^+\cup\mathcal{N}_\lambda^-$ is a critical point of $\Phi_\lambda$ restricted to $\mathcal{N}_\lambda^+\cup\mathcal{N}_\lambda^-$, then $u$ is a critical point of $\Phi_\lambda$.
\end{proposition}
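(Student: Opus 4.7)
The plan is to use the standard Lagrange multiplier argument that is ubiquitous in Nehari manifold techniques. Set
\[
J(u)=\Phi_\lambda'(u)(u)=H_\lambda(u)-F(u),
\]
so that $\mathcal{N}_\lambda=\{u\in E\setminus\{0\}:J(u)=0\}$ and $\mathcal{N}_\lambda^+\cup\mathcal{N}_\lambda^-$ is an open subset (in $\mathcal{N}_\lambda$) of $J^{-1}(0)$. Since $H_\lambda$ is $p$-homogeneous and $F$ is $\gamma$-homogeneous in the sense that $H_\lambda(tu)=t^pH_\lambda(u)$ and $F(tu)=t^\gamma F(u)$, differentiating gives $H_\lambda'(u)(u)=pH_\lambda(u)$ and $F'(u)(u)=\gamma F(u)$, hence
\[
J'(u)(u)=pH_\lambda(u)-\gamma F(u).
\]

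Now suppose $u\in\mathcal{N}_\lambda^+\cup\mathcal{N}_\lambda^-$ is a critical point of $\Phi_\lambda$ constrained to that set. The first step is to note that on $\mathcal{N}_\lambda$ one has $F(u)=H_\lambda(u)$, so
\[
J'(u)(u)=(p-\gamma)H_\lambda(u).
\]
Since $\gamma>p$ and by the very definition of $\mathcal{N}_\lambda^\pm$ we have $H_\lambda(u)\neq 0$ (negative on $\mathcal{N}_\lambda^+$, positive on $\mathcal{N}_\lambda^-$), it follows that $J'(u)(u)\neq 0$. In particular $J'(u)\not\equiv 0$, which is precisely what guarantees the preceding lemma that $\mathcal{N}_\lambda^\pm$ are $C^1$ manifolds of codimension one.

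The second step is the Lagrange multiplier rule: there exists $\mu\in\mathbb{R}$ with $\Phi_\lambda'(u)=\mu J'(u)$. Testing this identity against $u$ itself and using that $\Phi_\lambda'(u)(u)=0$ (because $u\in\mathcal{N}_\lambda$),
\[
0=\Phi_\lambda'(u)(u)=\mu\,J'(u)(u)=\mu(p-\gamma)H_\lambda(u).
\]
Since $(p-\gamma)H_\lambda(u)\neq 0$, we conclude $\mu=0$, whence $\Phi_\lambda'(u)=0$ in $E^*$.

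There is no real obstacle here; the only thing to watch is making sure $J'(u)(u)\neq 0$, which is exactly where the definitions of $\mathcal{N}_\lambda^+$ and $\mathcal{N}_\lambda^-$ (as opposed to the degenerate piece $\mathcal{N}_\lambda^0$) are used. The hypothesis $\gamma>p$ enters only through the sign of $p-\gamma$, and the argument is otherwise purely algebraic from the homogeneity of $H_\lambda$ and $F$.
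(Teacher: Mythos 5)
Your argument is correct and is exactly the standard Lagrange-multiplier/natural-constraint proof that the paper invokes without writing out: since $J'(u)(u)=pH_\lambda(u)-\gamma F(u)=(p-\gamma)H_\lambda(u)\neq 0$ on $\mathcal{N}_\lambda^+\cup\mathcal{N}_\lambda^-$, the multiplier rule applies and testing against $u$ forces $\mu=0$. The one point worth keeping explicit, as you do, is that $J'(u)(u)\neq 0$ is precisely where membership in $\mathcal{N}_\lambda^\pm$ rather than $\mathcal{N}_\lambda^0$ is used, and that $J$ itself is $C^1$ on $E$ because it is the difference of the $p$- and $\gamma$-homogeneous functionals $H_\lambda$ and $F$ rather than a genuine second derivative of $\Phi_\lambda$ (which need not exist for $1<p<2$).
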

In general the Nehari set $\mathcal{N}_\lambda^0$ is not a manifold. Thus, since $\mathcal{N}_\lambda^0\neq\emptyset$ implies that $\overline{\mathcal{N}_\lambda^+\cup\mathcal{N}_\lambda^-}\cap \mathcal{N}_\lambda^0\neq\emptyset$, we must take some care with the set $\mathcal{N}_\lambda^0$ in order to search for critical points of $\Phi_\lambda$ restricted to $\mathcal{N}_\lambda^+\cup\mathcal{N}_\lambda^-$. The study of $\mathcal{N}_\lambda^0$ is related to the extreme value (see \cite{yav2})
\begin{equation}\label{lambdaex}
\lambda^*=\inf\left\{\frac{\int |\nabla u|^p}{\int h |u|^p}: \int f|u|^\gamma\ge 0,\ \int h|u|^p>0 \right\}.
\end{equation} 
Throughout the paper, we eventually study the convergence of minimizing sequences. For this purpose we introduce some notations. Let $(u_{n})\subset E$ be a sequence such that $u_{n}\rightharpoonup u$ weakly in $E$. Following \cite{costa} we define
\begin{equation}\label{alpha}
\alpha_{\infty}:=\lim_{R\rightarrow+\infty}\limsup_{n\rightarrow+\infty}\int_{\{|x|>R\}}|u_{n}|^{\gamma},
\end{equation}
\begin{equation}\label{beta}
\beta_{\infty}:=\lim_{R\rightarrow+\infty}\limsup_{n\rightarrow+\infty}\int_{\{|x|>R\}}|\nabla u_{n}|^{p},
\end{equation}
where $\{|x|>R\}=\{x\in\mathbb{R}^{N}:|x|>R\}$. Hence, one has
\begin{equation}\label{alpha1}
\int |u|^{\gamma}+\alpha_{\infty}=\limsup_{n\rightarrow+\infty}\int |u_{n}|^{\gamma},
\end{equation}
\begin{equation}\label{alpha2}
\int f|u|^{\gamma}+\alpha_{\infty}f(\infty)=\limsup_{n\rightarrow+\infty}\int f|u_{n}|^{\gamma},
\end{equation}
\begin{equation}\label{beta1}
\int |\nabla u|^{p}+\beta_{\infty}=\limsup_{n\rightarrow+\infty}\int |\nabla u_{n}|^{p}.
\end{equation}
\begin{lemma}\label{l1} There holds $\lambda^*>\lambda_1$. Moreover, there exists a nonnegative function $u^{*}\in E$ such that
	
	\begin{equation*}
	\lambda^{*}=\frac{\int |\nabla u^{*}|^p}{\int h |u^{*}|^p} \quad \mbox{and} \quad \int f|u^{*}|^{\gamma}=0.
	\end{equation*}
	
\end{lemma}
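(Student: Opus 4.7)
The approach is the direct method of the calculus of variations applied to the infimum in \eqref{lambdaex}. Since the Rayleigh-type quotient and the two sign constraints are all invariant under positive dilation $u\mapsto tu$, we normalize a minimizing sequence $(u_n)\subset E$ by $\int h|u_n|^p=1$, so that $\int|\nabla u_n|^p\to\lambda^*$; in particular $(u_n)$ is bounded in $D^{1,p}(\mathbb{R}^N)$. To upgrade this to boundedness in $E$ we exploit $(F_{\infty})$: choose $R$ so large that $f\le f(\infty)/2<0$ on $\{|x|>R\}$. Then the constraint $\int f|u_n|^\gamma\ge 0$ yields
\[
\frac{|f(\infty)|}{2}\int_{\{|x|>R\}}|u_n|^\gamma\ \le\ \|f\|_\infty\int_{B_R}|u_n|^\gamma,
\]
and the right-hand side is controlled by $\|u_n\|_{D^{1,p}}^\gamma$ via H\"older on $B_R$ followed by the Sobolev embedding. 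Hence $(u_n)$ is bounded in $E$ and, up to a subsequence, $u_n\rightharpoonup u^*$ weakly in $E$.

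Since $h\in L^{N/p}(\mathbb{R}^N)$, the map $u\mapsto\int h|u|^p$ is weakly continuous on $D^{1,p}(\mathbb{R}^N)$ (truncate $h$ to a bounded-compact-support piece and use local Rellich compactness, then estimate the tail in $L^{N/p}$ via H\"older against the Sobolev-bounded $\|u\|_{p^*}^p$). Thus $\int h|u^*|^p=1$, so $u^*\neq 0$, and weak lower semicontinuity gives $\int|\nabla u^*|^p\le\lambda^*$. For the inequality constraint, formula \eqref{alpha2} together with $\alpha_\infty\ge 0$ and $f(\infty)<0$ yields
\[
0\ \le\ \limsup_{n\to\infty}\int f|u_n|^\gamma\ =\ \int f|u^*|^\gamma+\alpha_\infty f(\infty)\ \le\ \int f|u^*|^\gamma,
\]
so $u^*$ is admissible and combining these inequalities forces $u^*$ to be a minimizer of \eqref{lambdaex}. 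Replacing $u^*$ by $|u^*|$ (which leaves every integral unchanged since $|\nabla|u||=|\nabla u|$ a.e.\ in $W^{1,p}$) produces a nonnegative minimizer.

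It remains to establish $\lambda^*>\lambda_1$ and $\int f|u^*|^\gamma=0$. The bound $\lambda^*\ge\lambda_1$ is immediate from the variational characterization of $\lambda_1$; if equality held, $u^*$ would also minimize the quotient defining $\lambda_1$, and by simplicity of the first eigenvalue we would have $u^*=c\phi_1$ for some $c>0$, whence $\int f|u^*|^\gamma=c^\gamma\int f|\phi_1|^\gamma<0$ by $(F_{\phi_1})$, contradicting admissibility. Thus $\lambda^*>\lambda_1$. For the equality $\int f|u^*|^\gamma=0$: if instead $\int f|u^*|^\gamma>0$, then by continuity the inequality constraint remains strict along any small perturbation $u^*+tv$, so these perturbations are admissible and a Lagrange multiplier computation (differentiating the quotient at $t=0$) shows $u^*$ solves $-\Delta_p u^*=\lambda^*h|u^*|^{p-2}u^*$ weakly in $\mathbb{R}^N$; being a nontrivial nonnegative eigenfunction, $u^*$ must correspond to the first eigenvalue, giving $\lambda^*=\lambda_1$ and contradicting the strict inequality just proved.

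The principal obstacle is the compactness of minimizing sequences on the unbounded domain $\mathbb{R}^N$: without $(F_{\infty})$ neither the $L^\gamma$-boundedness in the first step (derived from the sign constraint) nor the favorable sign in the passage to the limit of $\int f|u_n|^\gamma$ (which allows any mass $\alpha_\infty$ escaping to infinity to contribute nonpositively) would be available. Both difficulties are resolved by the single hypothesis $f(\infty)<0$.
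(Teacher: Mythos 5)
Your proof is correct and follows essentially the same route as the paper: direct minimization using the concentration quantity $\alpha_\infty$ and $(F_\infty)$ to prevent vanishing or escape of mass to infinity, $(F_{\phi_1})$ to rule out $\lambda^*=\lambda_1$, and the Euler equation on the open constraint set together with the sign property of first eigenfunctions to force $\int f|u^*|^\gamma=0$. The only cosmetic difference is your normalization $\int h|u_n|^p=1$, which obliges you to derive $L^\gamma$-boundedness from the sign constraint and $(F_\infty)$ — a step the paper sidesteps by normalizing $\|u_n\|=1$ — but both arguments are sound.
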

\begin{proof} Let $(u_{n})\subset E$ be a normalized minimizing sequence to $\lambda^{*}$, that is, $\|u_{n}\|=1$ and
	
	\begin{equation*}
	\lim_{n\rightarrow+\infty}\frac{\int |\nabla u_{n}|^{p}}{\int h |u_{n}|^p}=\lambda^{*}, \quad \int f|u_{n}|^{\gamma}\geq0, \quad \int h|u_{n}|^{p}>0.
	\end{equation*}
		Notice that $u_{n}\rightharpoonup u^{*}$ weakly in $E$ and $\|u_{n}\|_{D^{1,p}}\rightarrow A\geq0$ as $n\rightarrow+\infty$. If $A=0$, then $u^{*}=0$. Thus, it follows from \eqref{alpha1} that $\alpha_{\infty}=1$. Hence, in view of \eqref{alpha2} we have
	\begin{equation*}
	f(\infty)=\limsup_{n\rightarrow+\infty}\int f|u_{n}|^{\gamma}\geq0,
	\end{equation*}	
	which contradicts assumption $(F_\infty)$. Therefore, $A>0$ and 	
	\begin{equation*}
	\lim_{n\rightarrow+\infty}\frac{A}{\int h |u_{n}|^p}=\lambda^{*}, \quad \int f|u_{n}|^{\gamma}\geq0, \quad \int h|u_{n}|^{p}>0.
	\end{equation*}	
	Now, we claim that $u^{*}\neq0$. In fact, if $u^{*}=0$, then by compactness (see \cite{costa}) we have that $\lim_{n\rightarrow+\infty}h|u_{n}|^{p}=0$, which is not possible and therefore $u^*\neq 0$. From $(F_\infty)$ and \eqref{alpha2} one has	
	\begin{equation*}
	\int f|u^*|^{\gamma}=-\alpha_{\infty}f(\infty)+\limsup\int f|u_{n}|^{\gamma}\geq0.
	\end{equation*}		
	Thus, we conclude that	
	\begin{equation*}
	0<\|u^{*}\|\leq 1, \quad \int h|u^{*}|^{p}>0 \quad \mbox{and} \quad \int f|u^{*}|^{\gamma}\geq0.
	\end{equation*}	 	
	We claim that $\lambda^{*}$ is achieved by $u^{*}$. For this purpose, it is suffices to prove that $u_{n}\rightarrow u^{*}$ strongly in $D^{1,p}(\mathbb{R}^{N})$. Suppose by contradiction that the strong convergence does not hold. Thus, $\|\nabla u^{*}\|_p<\liminf_{n\rightarrow+\infty}\|\nabla u_{n}\|_p$. Hence, we deduce that	
	\begin{equation*}
	\frac{\int |\nabla u^*|^p}{\int h |u^{*}|^p}<\liminf_{n\rightarrow+\infty}\frac{\int |\nabla u_n|^p}{\int h |u_{n}|^p}=\lambda^{*},
	\end{equation*}	
	which contradicts the definition of $\lambda^{*}$. Therefore, $u^*$ is a minimizer of $\lambda^{*}$. 
	
Now we claim that $\lambda^*>\lambda_1$. Indeed, it is obvious that $\lambda^*\ge \lambda_1$. If $\lambda^*=\lambda_1$, then $u^*=\phi_1$ which contradicts the hypothesis (F$_{\phi_1}$). Therefore $\lambda^*>\lambda_1$.

	It remains to prove that $\int f|u^{*}|^{\gamma}=0$. Suppose by contradiction that
	$
	\int f|u^{*}|^{\gamma}>0.
	$	
	Thus, the set	
	\begin{equation*}
	\left\{u\in E: \int f|u|^{\gamma}>0 \mbox{ and } \int h|u|^{p}>0 \right\},
	\end{equation*}	
	is an open subset of $E$. Taking into account that $u^{*}$ is a local minimum, one sees that	
	\begin{equation*}
	p\int |\nabla u^{*}|^{p-2}\nabla u^{*}\nabla v\int h|u^{*}|^{p}-p\int |\nabla u^{*}|^{p}\int h|u^{*}|^{p-2}u^{*}v=0,
	\end{equation*}	
	for all $v\in E$. Since $u^{*}$ is a minimizer of $\lambda^{*}$ we conclude that	
	\begin{equation*}
	\int |\nabla u^{*}|^{p-2}\nabla u^{*}\nabla v-\lambda^{*}\int h|u^{*}|^{p-2}u^{*}v=0,\quad \forall\ v\in E.
	\end{equation*}	
	Once $E$ is dense in $D^{1,p}(\mathbb{R}^{N})$ and the functional $$D^{1,p}(\mathbb{R}^{N})\ni v\longmapsto \int h|u^*|^{p-2}u^*v$$ is completely continuous, we conclude that 	
	\begin{equation*}
	\int |\nabla u^{*}|^{p-2}\nabla u^{*}\nabla v-\lambda^{*}\int h|u^{*}|^{p-2}u^{*}v=0,\quad \forall\ v\in D^{1,p}(\mathbb{R}^{N}).
	\end{equation*}	
	Thus, $\lambda^{*}$ is an eigenvalue of Problem \eqref{pe}. Recall that the unique eigenfunction which does not change sign is the one associated to $\lambda_{1}$. Since $\lambda^{*}>\lambda_{1}$ and $u^{*}$ is non-negative, we get a contradiction and therefore $\int f|u^{*}|^{\gamma}=0$.
\end{proof}
In view of the preceding Lemma, we obtain the following Corollary: 
\begin{corollary}\label{cor1} There holds	
	\begin{equation*}
	\lambda^*=\inf\left\{\frac{\int |\nabla u|^p}{\int h |u|^p}: \int f|u|^\gamma= 0,\ \int h|u|^p>0 \right\}.
	\end{equation*}	
\end{corollary}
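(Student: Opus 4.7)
The plan is to prove Corollary \ref{cor1} by showing both inequalities between $\lambda^*$ and the infimum $\tilde\lambda$ over the smaller constraint set
\[
\tilde\lambda:=\inf\left\{\frac{\int |\nabla u|^p}{\int h |u|^p}: \int f|u|^\gamma= 0,\ \int h|u|^p>0 \right\}.
\]
Everything follows almost immediately from Lemma~\ref{l1}, so no new estimates are required.

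First, I observe that the feasible set for $\tilde\lambda$ is contained in the feasible set defining $\lambda^*$, since $\int f|u|^\gamma=0$ implies $\int f|u|^\gamma\ge 0$. Taking infima over a smaller set can only increase the value, so $\lambda^*\le\tilde\lambda$.

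For the reverse inequality, I invoke Lemma~\ref{l1} directly: it provides a nonnegative $u^*\in E$ satisfying $\lambda^*=\int|\nabla u^*|^p/\int h|u^*|^p$, together with $\int f|u^*|^\gamma=0$. The proof of the lemma also produced $\int h|u^*|^p>0$, so $u^*$ is admissible for the infimum defining $\tilde\lambda$. Therefore $\tilde\lambda\le\int|\nabla u^*|^p/\int h|u^*|^p=\lambda^*$, completing the equality.

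There is no genuine obstacle here; the entire content of the corollary is that the minimizer produced in Lemma~\ref{l1} actually saturates the equality constraint $\int f|u|^\gamma=0$, a fact that was established at the end of that lemma via the eigenvalue argument ruling out $\int f|u^*|^\gamma>0$. The only thing to be careful about is to cite explicitly both the strict positivity $\int h|u^*|^p>0$ and the equality $\int f|u^*|^\gamma=0$ when verifying admissibility.
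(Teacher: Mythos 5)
Your argument is correct and is exactly the one the paper intends: the corollary is stated as an immediate consequence of Lemma~\ref{l1}, with the inequality $\lambda^*\le\tilde\lambda$ coming from the inclusion of constraint sets and the reverse inequality from the admissibility of the minimizer $u^*$ (which satisfies $\int f|u^*|^\gamma=0$ and $\int h|u^*|^p>0$). Nothing is missing.
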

In light of Lemma \ref{l1}, we know that the minimization problem $\lambda^*$ has a minimizer. The next result ensures that we can use this minimizer to get a solution of Problem \eqref{p}.
\begin{lemma}\label{lagranextre}
	Suppose that $u^{*}\in E$ is a minimizer of $\lambda^*$. Then, there exists a constant $t_{0}>0$ such that $t_{0}u^{*}$ is a solution of Problem \eqref{p} with $\lambda=\lambda^*$. Moreover $t_0u^*\in \mathcal{N}_{\lambda^*}^0$.
\end{lemma}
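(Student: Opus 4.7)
The plan is to treat $u^*$ as a constrained minimizer of the scale-invariant functional $J(u)=\int|\nabla u|^p / \int h|u|^p$ over the set $\{F(u)\ge 0,\ \int h|u|^p>0\}$, extract an Euler--Lagrange identity via a Lagrange multiplier, and then choose $t_0>0$ so that the homogeneity mismatch between the $p$-terms and the $\gamma$-term of $\Phi'_{\lambda^*}$ is absorbed by the rescaling $u^*\mapsto t_0 u^*$. By Lemma \ref{l1} and Corollary \ref{cor1}, $u^*$ is nonnegative with $F(u^*)=0$ and $B:=\int h|u^*|^p>0$, so the minimum is attained on the active part of the inequality constraint.

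To derive the Lagrange identity with the correct sign, I would argue by one-sided perturbation: for any $w\in E$ with $F'(u^*)(w)>0$ the point $u^*+\epsilon w$ remains in $\{F>0,\ \int h|u|^p>0\}$ for small $\epsilon>0$, so minimality of $u^*$ forces $J'(u^*)(w)\ge 0$; the analogous inequality for $F'(u^*)(w)<0$ together with continuity on the kernel yields $J'(u^*)=\mu F'(u^*)$ for some $\mu\ge 0$. Computing both differentials explicitly and using $\int|\nabla u^*|^p=\lambda^* B$, this is equivalent to the existence of a constant $C=\mu\gamma B/p\ge 0$ such that
\begin{equation*}
\int|\nabla u^*|^{p-2}\nabla u^*\nabla w - \lambda^*\int h|u^*|^{p-2}u^*w = C\int f|u^*|^{\gamma-2}u^*w,\qquad \forall\, w\in E.
\end{equation*}

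I would then rule out $C=0$ in exactly the manner used at the end of Lemma \ref{l1}: if $C=0$, the displayed identity extends by density of $E$ in $D^{1,p}(\mathbb{R}^N)$ (using the complete continuity of the weighted term) to every test function in $D^{1,p}(\mathbb{R}^N)$, making the nonnegative $u^*$ an eigenfunction of \eqref{pe} at $\lambda^*$; since only $\lambda_1$ admits a nonnegative eigenfunction, this forces $\lambda^*=\lambda_1$, contradicting Lemma \ref{l1}. Hence $C>0$, and I set $t_0:=C^{1/(\gamma-p)}>0$, which is legitimate because $\gamma>p$. A direct scaling computation, multiplying the displayed identity through by $t_0^{p-1}$ and using $t_0^{p-1}C=t_0^{\gamma-1}$, yields $\Phi'_{\lambda^*}(t_0 u^*)(w)=0$ for every $w\in E$, so $t_0 u^*$ is a critical point of $\Phi_{\lambda^*}$ and hence a solution of \eqref{p} with $\lambda=\lambda^*$. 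The membership $t_0 u^*\in\mathcal{N}_{\lambda^*}^0$ is then automatic from $H_{\lambda^*}(t_0 u^*)=t_0^p H_{\lambda^*}(u^*)=0$ and $F(t_0 u^*)=t_0^\gamma F(u^*)=0$. The delicate step of the plan is justifying both the sign $\mu\ge 0$ and the strict positivity $C>0$; both ultimately lean on the strict inequality $\lambda^*>\lambda_1$ combined with the nonnegativity of $u^*$, while the remaining work is bookkeeping with the scaling exponents.
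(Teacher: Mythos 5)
Your overall strategy is the same as the paper's: view $u^*$ as a minimizer of the Rayleigh-type quotient with the constraint $F\ge 0$ active at $u^*$, extract a multiplier identity $J'(u^*)=\mu F'(u^*)$ with $\mu\ge 0$, rule out $\mu=0$ by the argument that a nonnegative eigenfunction can only correspond to $\lambda_1<\lambda^*$, and then rescale using $\gamma>p$. The sign analysis, the exclusion of $\mu=0$, the choice $t_0=C^{1/(\gamma-p)}$, and the verification that $t_0u^*\in\mathcal{N}_{\lambda^*}^0$ are all correct and match the paper.

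The gap is in the derivation of the multiplier identity itself: your one-sided perturbation argument, and in particular the ``continuity on the kernel'' step, tacitly assumes the constraint qualification $F'(u^*)\not\equiv 0$. To show that $J'(u^*)$ vanishes on $\ker F'(u^*)$ you must perturb kernel directions by some $w_0$ with $F'(u^*)(w_0)\neq 0$; if no such $w_0$ exists, the set of directions yielding any first-order information is empty and the identity $J'(u^*)=\mu F'(u^*)$ is unproved. This degenerate case is not vacuous a priori: $F'(u^*)(w)=\gamma\int f|u^*|^{\gamma-2}u^*w=0$ for all $w\in E$ means exactly that $u^*$ vanishes a.e.\ on $\Omega_f^+\cup\Omega_f^-$, i.e.\ $\supp(u^*)\subset\Omega_f^0$. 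This is precisely the point at which the paper invokes hypothesis $(F_2)$: if $\supp(u^*)\subset\Omega_f^0$, then comparing $\lambda^*$ with $\lambda_1(\operatorname{int}(\Omega_f^+\cup\Omega_f^0))$ and $\lambda_1(\operatorname{int}(\Omega_f^0))$ yields a contradiction via the thickness inequality. Your proof never uses $(F_2)$ and never addresses this case, so as written it is incomplete. (It can be repaired: in the degenerate case $F(u^*+\epsilon w)=\epsilon^\gamma F(w)$ for the part of the integral over $\{f\neq 0\}$, so every $w$ with $F(w)\ge 0$ is an admissible direction; since $F(-w)=F(w)$, the linear functional $J'(u^*)$ then vanishes on the nonempty open set $\{F>0\}$ guaranteed by $(F_1)$, hence vanishes identically, and the eigenfunction argument gives the contradiction. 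But some such argument, or the paper's appeal to $(F_2)$, must be supplied.)
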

\begin{proof}
	 Let $u^{*}$ be a minimizer of $\lambda^*$. In order to use the Lagrange Multiplier Theorem, we first prove that the derivative of the function $D^{1,p}\ni u\mapsto G(u)\equiv(\int h |u|^p,\int f|u|^\gamma)$ is surjective at $u^*$. In fact, let $\alpha,\beta\in\mathbb{R}$ be such that
	\begin{equation}
	\alpha p\int h|u^{*}|^{p-2}u^{*}v+\beta\gamma\int f|u^{*}|^{\gamma-2}u^{*}v=0, \quad \forall \ v\in E.
	\end{equation}
	By taking $v=u^{*}$ we easily conclude that $\alpha=0$. Now, let us suppose by contradiction that $\beta\neq0$. Thus, we have	
	\begin{equation*}
	\int f|u^{*}|^{\gamma-2}u^{*}v=0, \quad \forall \ v\in E,
	\end{equation*}	
	which implies that $f|u^{*}|^{\gamma-2}u^{*}=0$ a.e. in $\mathbb{R}^{N}$ and supp$(u^{*})\subset\Omega^{0}$. If int$(\Omega^{0}_{f})=\emptyset$, then we get a contradiction. Let int$(\Omega^{0}_{f})\neq\emptyset$ and consider	
	\begin{equation*}
	\lambda_{1}(\Omega^{0}_{f}\cup\Omega^{+}_{f})=\inf\left\{\frac{\int|\nabla u|^{p}}{\int h|u|^{p}}: u\in W_0^{1,p}(\Omega^{0}\cup \Omega^{+}), \ \int h|u|^{p}>0 \right\}.
	\end{equation*}	
	By using $(F_{2})$ and the fact that supp$(u^{*})\subset\Omega^{0}$ we deduce that	
	\begin{equation*}
	\lambda^{*}\leq \lambda_{1}(\Omega^{0}_{f}\cup\Omega^{+}_{f})<\lambda_{1}(\Omega^{0}_{f})=\lambda^{*},
	\end{equation*}	
	which is not possible. Therefore, $G'(u^*)$ is surjective and from the Lagrange Multiplier Theorem, there exists $\nu\ge 0$ such that
	\begin{equation}\label{kj1}
	\left(\frac{\int|\nabla u^{*}|^{p}}{\int h|u^{*}|^{p}} \right)'\cdot v=\nu\gamma\int f|u^{*}|^{\gamma-2}u^{*}v,\quad \forall \ v\in E.
	\end{equation}
	Notice that
	\begin{equation}\label{kj2}
	\left(\frac{\int|\nabla u^{*}|^{p}}{\int h|u^{*}|^{p}} \right)'\cdot v=\frac{p}{\int h|u^{*}|^{p}}\left(\int |\nabla u^{*}|^{p-2}\nabla u^{*}\nabla v-\lambda^{*}\int h|u^{*}|^{p-2}u^{*}v \right).
	\end{equation}
	We claim that $\nu\neq0$. Indeed, if $\nu=0$, we combine \eqref{kj1} with \eqref{kj2} to conclude as in the proof of Lemma \ref{l1} that $\lambda^*=\lambda_1$, which is a contradiction. Therefore	$\nu\neq 0$ and 
	\begin{equation*}
	p\int |\nabla u^{*}|^{p-2}\nabla u^{*}\nabla v-p\lambda^{*}\int h|u^{*}|^{p-2}u^{*}v=\gamma\nu\int h|u^*|^p\int f|u^{*}|^{\gamma-2}u^{*}v, \quad \forall\ v\in E.
	\end{equation*}	
	Multiplying the last equation by $|t|^{p-2}t$, where $t\neq 0$, we obtain that for all $v\in E$, there holds
	\begin{equation*}
	p\int |\nabla (tu^{*})|^{p-2}\nabla tu^{*}\nabla v-p\lambda^{*}\int h|tu^{*}|^{p-2}tu^{*}v=\gamma\nu|t|^{p-\gamma}\int h|u^*|^p\int f|tu^{*}|^{\gamma-2}u^{*}v.
	\end{equation*}	
	By choosing $t=t_0$ such that $\gamma\nu|t_0|^{p-\gamma}\int h|u^*|^p=1$, the proof is completed.
\end{proof}
As a consequence of Corollary \ref{cor1} and Lemma \ref{lagranextre} we obtain our main result in relation to the Nehari set $\mathcal{N}_\lambda^0$.
\begin{proposition}\label{neharizero} If $\lambda\in(\lambda_1,\lambda^*)$, then $\mathcal{N}_\lambda^0=\emptyset$. Moreover, if $\lambda\ge \lambda^*$, then $\mathcal{N}_\lambda^0\neq\emptyset$.
\end{proposition}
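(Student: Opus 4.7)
The plan is to treat the two directions of the proposition separately.

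\emph{Direction 1: $\lambda \in (\lambda_1, \lambda^*) \Rightarrow \mathcal{N}_\lambda^0 = \emptyset$.} I argue by contradiction. Suppose $u \in \mathcal{N}_\lambda^0$. Then $u \neq 0$, $F(u) = 0$, and $\int|\nabla u|^p = \lambda \int h|u|^p$. If $\int h|u|^p > 0$, then $u$ is admissible in the infimum from Corollary~\ref{cor1}, giving $\lambda = \int|\nabla u|^p / \int h|u|^p \ge \lambda^*$, which contradicts $\lambda < \lambda^*$. If $\int h|u|^p \le 0$, then since $\lambda > \lambda_1 > 0$, the identity forces $\int|\nabla u|^p \le 0$, so $u = 0$ in $D^{1,p}(\mathbb{R}^N)$, again a contradiction.

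\emph{Direction 2: $\lambda \ge \lambda^* \Rightarrow \mathcal{N}_\lambda^0 \neq \emptyset$.} For $\lambda = \lambda^*$, Lemma~\ref{lagranextre} directly provides $t_0 u^* \in \mathcal{N}_{\lambda^*}^0$. For $\lambda > \lambda^*$, the plan is to realize $\lambda$ as a value of the Rayleigh quotient $R(u) := \int|\nabla u|^p / \int h|u|^p$ on the set $\mathcal{S} := \{u \in E\setminus\{0\}: F(u) = 0,\ \int h|u|^p > 0\}$. By Corollary~\ref{cor1}, $\inf_\mathcal{S} R = \lambda^*$ and is attained at $u^*$. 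The complementary step is to show that $R$ is unbounded above on $\mathcal{S}$: using $(F_1)$ and the boundedness of $\Omega_f^+$ coming from $(F_\infty)$, pick disjointly supported bumps $\phi_\pm \in C_0^\infty(\Omega_f^\pm)$, rescale them as $\phi_\pm((\cdot - x_\pm)/\epsilon)$ with amplitude $\epsilon^{-(N-p)/p}$, and balance their coefficients so that $F = 0$; a standard scaling computation then yields $R \sim \epsilon^{-p} \to \infty$ as $\epsilon \to 0$. Joining $u^*$ to such a concentrated $w_\epsilon$ by a continuous path inside $\mathcal{S}$ and applying the intermediate value theorem to $R$ would produce a point with $R = \lambda$, i.e., an element of $\mathcal{N}_\lambda^0$.

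\emph{Main obstacle.} The delicate step is producing the continuous path inside the nonlinear constraint set $\mathcal{S}$. Since $F$ is homogeneous of degree $\gamma$, scaling along the ray $\{t u^* : t > 0\}$ keeps $F = 0$ but leaves $R$ constantly equal to $\lambda^*$, so a naive convex combination of $u^*$ with $w_\epsilon$ does not preserve $F = 0$. A workable alternative is to use a three-function combination $u = s u^* + a\phi_+ + b\phi_-$ and to solve the system $(F(u), H_\lambda(u)) = (0, 0)$ in the two unknowns $(a, b) \in \mathbb{R}^2$ (with $s$ a parameter) by a topological degree argument on a suitable bounded region, choosing disjoint bumps $\phi_\pm \in C_0^\infty(\Omega_f^\pm)$ concentrated enough that $H_\lambda(\phi_+)$ and $H_\lambda(\phi_-)$ are both positive. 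The different homogeneities of $F$ (degree $\gamma$) and $H_\lambda$ (degree $p$), combined with $H_\lambda(u^*) < 0$, then produce a nontrivial degree of the map $(a, b) \mapsto (F(u), H_\lambda(u))$ on the boundary of an appropriate box, forcing a zero in the interior and yielding the required element of $\mathcal{N}_\lambda^0$.
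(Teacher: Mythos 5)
Your first direction and the case $\lambda=\lambda^*$ match the paper exactly: the nonexistence for $\lambda\in(\lambda_1,\lambda^*)$ is the same contradiction with Corollary~\ref{cor1} (your explicit check that $\int h|u|^p>0$ is a small improvement over the paper, which asserts it without comment), and $\mathcal{N}_{\lambda^*}^0\neq\emptyset$ comes from Lemma~\ref{lagranextre} in both arguments. For $\lambda>\lambda^*$ your high-level plan is also the paper's: view $\lambda$ as a value of $R(u)=\int|\nabla u|^p/\int h|u|^p$ on $\mathcal{S}=\{u\neq 0: F(u)=0,\ \int h|u|^p>0\}$, where the infimum $\lambda^*$ is attained, and show that $R$ also takes arbitrarily large values.

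However, the step that actually produces an element of $\mathcal{N}_\lambda^0$ for a given $\lambda>\lambda^*$ is not complete, for two reasons. First, your concentrated bumps $\phi_\pm$ supported in $\Omega_f^{\pm}$ need not satisfy $\int h|w_\epsilon|^p>0$: nothing in $(F_1)$, $(F_2)$, $(F_\infty)$, $(F_{\phi_1})$ forces $h$ to be positive anywhere on $\Omega_f^+\cup\Omega_f^-$ (the set $\{h>0\}$ could sit entirely inside $\Omega_f^0$), so $w_\epsilon$ may fail to lie in $\mathcal{S}$ and the computation $R(w_\epsilon)\sim\epsilon^{-p}$ is then vacuous. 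Second, you correctly identify that the intermediate value argument needs a continuous path in $\mathcal{S}$ from $u^*$ to $w_\epsilon$, but the proposed repair --- a degree count for $(a,b)\mapsto\bigl(F(su^*+a\phi_+ + b\phi_-),\,H_\lambda(su^*+a\phi_+ + b\phi_-)\bigr)$ --- is only asserted: no boundary estimates are given, and since $\supp u^*$ need not be disjoint from $\supp\phi_\pm$, the map does not decouple in the way the sketch suggests. So the conclusion for $\lambda>\lambda^*$ is not yet proved. For what it is worth, the paper's own treatment of this step is also very terse (it produces a sequence with $R(u_n)\to\infty$ by approaching the part of the boundary of $\mathcal{X}$ where $\int h|u|^p=0$, and silently invokes connectedness of $\mathcal{X}$ to apply the intermediate value theorem), so the obstruction you flag is a real one; but your proposal does not close it either.
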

\begin{proof} Indeed, assume that $\lambda\in (\lambda_1,\lambda^*)$ and suppose on the contrary that there exists $u\in \mathcal{N}_\lambda^0$. From the definition we have that $u\neq 0$ and $u\in\mathcal{N}_\lambda^0$ if, and only if
		\begin{equation*}
	\int |\nabla u|^p-\lambda\int h|u|^p=\int f|u|^\gamma=0.
	\end{equation*}
It follows that 
\begin{equation*}
\frac{\int |\nabla u|^p}{\int h|u|^p}=\lambda<\lambda^*,\ \int h|u|^p>0,\ \int f|u|^\gamma=0,
\end{equation*}
which contradicts Corollary \ref{cor1} and therefore  $\mathcal{N}_\lambda^0=\emptyset$ if $\lambda\in (\lambda_1,\lambda^*)$. In view of \ref{lagranextre} we conclude that $\mathcal{N}_{\lambda^*}^0\neq \emptyset$. In order to prove that $\mathcal{N}_{\lambda}^0\neq \emptyset$ for $\lambda>\lambda^*$, we note that the functional 
$R:\mathcal{X}\to [0,\infty)$ defined by
\begin{equation*}
R(u):=\frac{\int |\nabla u|^p}{\int h|u|^p},\quad u\in \mathcal{X},
\end{equation*} 
where $\mathcal{X}=\{ u\in E\setminus\{0\}:\ \int h|u|^p>0,\ \int f|u|^\gamma=0\}$ is continuous. Note that if $\lambda=R(u)$, then 
\begin{equation*}
\int |\nabla u|^p-\lambda\int h|u|^p=\int f|u|^\gamma=0,
\end{equation*}
and hence $u\in \mathcal{N}_\lambda^*$. Therefore it is enough to prove that there exists a sequence $(u_n)\subset X$ such that $R(u_n)\to \infty$ as $n\to \infty$. For this purpose note that if 
\begin{equation*}
u\in\overline{\mathcal{X}\cap\{u\in E:\ \|u\|=1\}},
\end{equation*} 
and $u_n \to u$ in $E$, then $R(u_n)\to \infty$. Since $\int h|tu|^p=t^p\int h|u|^p$ and $\int f|tu|^\gamma=t^\gamma\int h|u|^\gamma$ one conclude that $\overline{\mathcal{X}\cap\{u\in E:\ \|u\|=1\}}\neq\emptyset$ and therefore the proof is completed.
\end{proof}
\section{Two Solutions For $\lambda\in(\lambda_1,\lambda^*]$}\label{s3}
In this section we show the existence of two positive solutions to Problem \eqref{p} for $\lambda\in (\lambda_1,\lambda^*]$. We point out that in \cite{costa} the existence of two positive solutions was proved for $\lambda>\lambda_1$ and close to $\lambda_1$. However, we emphasize here that the method employed there does work for all $\lambda\in(\lambda_1,\lambda^*)$. The case $\lambda=\lambda^*$ is more delicate and requires new ideas. Consider the constrained minimization problems
\begin{equation}\label{mini-}
\hat{J}_\lambda^-:=\inf \{\Phi_\lambda(u),\ \forall\ u\in \mathcal{N}_{\lambda}^{-}\},
\end{equation}
and
\begin{equation}\label{mini+}
\hat{J}_\lambda^+:=\inf \{\Phi_\lambda(u),\ \forall\ u\in \mathcal{N}_{\lambda}^{+}\}.
\end{equation}
Similarly to \cite{costa} we introduce the following sets:
\begin{equation*}
L^{-}(\lambda):=\left\{u\in E: \|u\|_{D^{1,p}}=1 \mbox{ and } \int |\nabla u|^{p}-\lambda\int h|u|^{p}<0 \right\},
\end{equation*}
\begin{equation*}
B^{+}(\lambda):=\left\{u\in E: \|u\|_{D^{1,p}}=1 \mbox{ and } \int f|u|^{\gamma}>0 \right\}.
\end{equation*}
As an application of Proposition \ref{neharizero} we obtain the following Corollary:
\begin{corollary}\label{cor} For each $\lambda\in (\lambda_1,\lambda^*)$, there holds $\overline{L^-(\lambda)}\cap \overline{B^+(\lambda)}=\emptyset$. For each $\lambda\ge \lambda^*$, there holds $\overline{L^-(\lambda)}\cap \overline{B^+(\lambda)}\neq\emptyset$.
\end{corollary}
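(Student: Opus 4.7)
The plan is to treat the two claims separately: the first follows from the definition of $\lambda^*$ by a short contradiction, while the second is obtained by constructing an element of the intersection from the minimizer $u^*$ of Lemma \ref{l1}.

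For $\lambda \in (\lambda_1, \lambda^*)$, I would argue by contradiction. Suppose $u \in \overline{L^-(\lambda)} \cap \overline{B^+(\lambda)}$. The functionals $u \mapsto \int |\nabla u|^p$, $u \mapsto \int h|u|^p$ and $u \mapsto \int f|u|^\gamma$ are continuous on $E$, thanks to the embedding $D^{1,p}(\mathbb{R}^N) \hookrightarrow L^{p^*}(\mathbb{R}^N)$ combined with $h \in L^{N/p} \cap L^\infty$ and $f \in L^\infty$. Passing to the limit in the defining inequalities yields $\|u\|_{D^{1,p}} = 1$, $H_\lambda(u) \leq 0$ and $F(u) \geq 0$. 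Since $\int |\nabla u|^p = 1 > 0$, one has $\int h|u|^p \geq 1/\lambda > 0$, so $u$ is admissible in the minimization defining $\lambda^*$; this forces $\lambda^* \leq \int |\nabla u|^p/\int h|u|^p \leq \lambda < \lambda^*$, a contradiction.

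For $\lambda \geq \lambda^*$, let $u^*$ be the minimizer from Lemma \ref{l1}, rescaled so that $\|u^*\|_{D^{1,p}} = 1$; then $\int h(u^*)^p = 1/\lambda^*$ and $\int f(u^*)^\gamma = 0$. If $\lambda > \lambda^*$, then $H_\lambda(u^*) = 1 - \lambda/\lambda^* < 0$, so $u^* \in L^-(\lambda)$ directly. If $\lambda = \lambda^*$, I would perturb $u^*$ into $L^-(\lambda^*)$ using the Lagrange equation of Lemma \ref{lagranextre}, which (after rescaling) takes the form
$\int |\nabla u^*|^{p-2}\nabla u^* \cdot \nabla v - \lambda^* \int h(u^*)^{p-1} v = C \int f(u^*)^{\gamma-1} v$
for all $v \in E$, with $C > 0$. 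Because $C \neq 0$, the linear functional $v \mapsto \int f(u^*)^{\gamma-1} v$ is nonzero on $E$, so there exists $\xi \in E$ with $\int f(u^*)^{\gamma-1} \xi > 0$. A first-order expansion gives $H_{\lambda^*}(u^* - t\xi) = -tpC \int f(u^*)^{\gamma-1}\xi + O(t^2) < 0$ for small $t > 0$; setting $v_t^- := (u^* - t\xi)/\|u^* - t\xi\|_{D^{1,p}}$ yields $v_t^- \in L^-(\lambda^*)$ and $v_t^- \to u^*$ in $E$, hence $u^* \in \overline{L^-(\lambda^*)}$. Similarly, $F(u^* + t\xi) = t\gamma \int f(u^*)^{\gamma-1}\xi + O(t^2) > 0$, whence $v_t^+ := (u^* + t\xi)/\|u^* + t\xi\|_{D^{1,p}} \in B^+(\lambda)$ and $v_t^+ \to u^*$ in $E$, showing $u^* \in \overline{B^+(\lambda)}$ for every $\lambda \geq \lambda^*$.

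The main obstacle is the endpoint case $\lambda = \lambda^*$, where $u^*$ sits exactly on the boundary of both $L^-(\lambda^*)$ and $B^+(\lambda^*)$. The Lagrange identity of Lemma \ref{lagranextre} is indispensable here: it encodes the non-degeneracy that allows the $\pm\xi$ perturbations to enter the two open sets in opposite directions simultaneously.
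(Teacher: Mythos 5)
Your argument is correct, and it is both more detailed and more complete than the paper's own proof, which consists of a single line: assume $u\in \overline{L^-(\lambda)}\cap \overline{B^+(\lambda)}$, conclude $u\in\mathcal{N}_\lambda^0$, and invoke Proposition \ref{neharizero}. That one-liner really only establishes the first claim (and even there it is loose, since membership in the two closures only gives $H_\lambda(u)\le 0$ and $F(u)\ge 0$, not the equalities defining $\mathcal{N}_\lambda^0$); your version of the first part --- passing to the limit in the defining inequalities and observing that $u$ is then admissible for the quotient defining $\lambda^*$ with value at most $\lambda$ --- is the precise form of what the paper intends. For the second claim the paper offers no explicit argument; implicitly one would take a normalized element of $\mathcal{N}_\lambda^0$ (nonempty for $\lambda\ge\lambda^*$ by Proposition \ref{neharizero}) and still have to show it is a limit of elements of $L^-(\lambda)$ and of $B^+(\lambda)$, which is exactly the perturbation you carry out. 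Your route --- working directly with the minimizer $u^*$ of Lemma \ref{l1} and using the Lagrange identity of Lemma \ref{lagranextre} to push $u^*\mp t\xi$ into $B^+$ and $L^-$ respectively --- therefore supplies a step the paper leaves implicit, at the cost of invoking the (nontrivial) existence of the minimizer and its Euler--Lagrange equation.

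Two small points of hygiene. First, the nonvanishing of the functional $v\mapsto\int f(u^*)^{\gamma-1}v$ does not follow from $C\neq 0$ alone; it is the content of the surjectivity step in the proof of Lemma \ref{lagranextre} (where $f|u^*|^{\gamma-2}u^*\equiv 0$ is ruled out via hypothesis $(F_2)$), so you should cite that step rather than the multiplier. Second, for $1<p<2$ the map $u\mapsto\int|\nabla u|^p$ is $C^1$ but not $C^2$, so the remainder in your expansions should be written as $o(t)$ rather than $O(t^2)$; the argument only needs first-order differentiability, so nothing is lost. Neither point affects the validity of the proof.
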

\begin{proof} Indeed, suppose that there exists $u\in \overline{L^-(\lambda)}\cap \overline{B^+(\lambda)}$, therefore $u\in \mathcal{N}_\lambda^0$ and from Proposition \ref{neharizero} $\lambda\ge \lambda^*$.
	\end{proof}
 By using Corollary~\ref{cor}, J. Chabrowski and D.G. Costa \cite{costa}, have proved the following Theorem:
\begin{theorem}\label{exisbeforlam} For each $\lambda\in (\lambda_1,\lambda^*)$, there exists $u_\lambda\in \mathcal{N}_\lambda^+$ and $w_\lambda\in \mathcal{N}_\lambda^-$ such that $\hat{J}_\lambda^-=\Phi_\lambda(w_\lambda)$, $\hat{J}_\lambda^+=\Phi_\lambda(u_\lambda)$ and $u_\lambda,w_\lambda$ are solutions of \eqref{p}.
\end{theorem}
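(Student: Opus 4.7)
The plan is to realize the two infima by the fibering method combined with a concentration-compactness analysis governed by $(F_\infty)$, using the projective separation provided by Corollary~\ref{cor} to prevent weak limits from drifting into $\mathcal{N}_\lambda^0$. First, the Nehari identity $H_\lambda(u)=F(u)$ reduces $\Phi_\lambda$ on $\mathcal{N}_\lambda$ to $(1/p-1/\gamma)H_\lambda(u)$, negative on $\mathcal{N}_\lambda^+$ and positive on $\mathcal{N}_\lambda^-$. For each $u\in E\setminus\{0\}$, the fibering map $t\mapsto \Phi_\lambda(tu)$ admits a positive critical point iff $H_\lambda(u)$ and $F(u)$ share a sign; a short second-derivative check classifies this critical point as a minimum landing in $\mathcal{N}_\lambda^+$ (both negative) or a maximum landing in $\mathcal{N}_\lambda^-$ (both positive). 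Since $\lambda>\lambda_1$ together with $(F_{\phi_1})$ gives $H_\lambda(\phi_1)<0$ and $F(\phi_1)<0$, the fiber through $\phi_1$ produces an element of $\mathcal{N}_\lambda^+$; a localization argument using $(F_1)$ supplies $u$ with $H_\lambda(u)>0$ and $F(u)>0$, making $\mathcal{N}_\lambda^-$ non-empty as well.

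Next, take minimizing sequences $(u_n)\subset \mathcal{N}_\lambda^+$ and $(w_n)\subset \mathcal{N}_\lambda^-$. The identity $\Phi_\lambda=(1/p-1/\gamma)H_\lambda$ on $\mathcal{N}_\lambda$, the Sobolev inequality, and the integrability of $h$ and $f$ give $E$-boundedness; pass to subsequences $u_n\rightharpoonup u_\lambda$ and $w_n\rightharpoonup w_\lambda$ weakly in $E$. The compactness of $v\mapsto \int h|v|^p$ on $D^{1,p}(\mathbb{R}^N)$ (coming from $h\in L^{N/p}$) together with the profile decompositions \eqref{alpha1}--\eqref{beta1} provide the machinery for passing to the limit.

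The delicate step is to exclude vanishing of the weak limits and to keep them in the correct Nehari component. If $w_\lambda=0$, then \eqref{alpha2} with $(F_\infty)$ yields $\limsup F(w_n)=\alpha_\infty f(\infty)\le 0$, contradicting a uniform positive lower bound on $F(w_n)$ for $w_n\in\mathcal{N}_\lambda^-$ (which follows from a Sobolev--Nehari estimate $\|w_n\|\ge c>0$); the analogous argument via $F(u_n)<0$ bounded away from $0$ rules out $u_\lambda=0$. For correct placement, the $D^{1,p}$-normalizations $u_n/\|u_n\|_{D^{1,p}}$ lie in the open set $L^-(\lambda)$, and Corollary~\ref{cor} ensures $\dist(\overline{L^-(\lambda)},\overline{B^+(\lambda)})>0$ whenever $\lambda<\lambda^*$. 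This gap passes to the weak limit, precluding $u_\lambda\in\mathcal{N}_\lambda^0$ and forcing $u_\lambda\in\mathcal{N}_\lambda^+$; the symmetric argument gives $w_\lambda\in\mathcal{N}_\lambda^-$. Strong $D^{1,p}$-convergence of the minimizing sequences then follows from minimality: a strict loss in the weak-$*$ lower semicontinuity of $\|\nabla \cdot\|_p^p$ would allow a fibering rescaling producing a competitor with energy strictly below the infimum.

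Finally, Proposition~\ref{critialpoints} promotes $u_\lambda$ and $w_\lambda$ to unconstrained critical points of $\Phi_\lambda$, hence weak solutions of \eqref{p}; replacing them by $|u_\lambda|$ and $|w_\lambda|$ (which leaves $\Phi_\lambda$, $H_\lambda$ and $F$ unchanged and preserves Nehari-class membership) and applying the strong maximum principle for the $p$-Laplacian yields positivity. I expect the main obstacle to be the compactness step: the non-compactness of $D^{1,p}(\mathbb{R}^N)\hookrightarrow L^\gamma(\mathbb{R}^N)$ forces careful accounting of mass at infinity, and $(F_\infty)$ is the precise tool that turns potential escape of mass into a contradiction with membership in the correct Nehari component.
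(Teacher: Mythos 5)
This theorem is not proved in the paper at all: it is imported verbatim from Chabrowski--Costa \cite{costa}, and the paper only records that the argument there (minimization of $\Phi_\lambda$ separately on $\mathcal{N}_\lambda^+$ and $\mathcal{N}_\lambda^-$, made possible by $\overline{L^-(\lambda)}\cap\overline{B^+(\lambda)}=\emptyset$) extends to all $\lambda\in(\lambda_1,\lambda^*)$. Your outline reconstructs exactly that strategy --- fibering classification of the two components, non-emptiness via $\phi_1$ and $(F_1)$, concentration-compactness at infinity driven by $(F_\infty)$, and Proposition~\ref{critialpoints} to remove the constraint --- so in spirit it matches the cited proof.

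Two steps, however, are asserted in a form that does not hold as written. First, you claim $E$-boundedness of the minimizing sequences from ``the Sobolev inequality and the integrability of $h$ and $f$''; this is not enough. On $\mathcal{N}_\lambda^+$ one has $\Phi_\lambda=c_{p,\gamma}H_\lambda=c_{p,\gamma}F<0$, and writing $u=tv$ with $\|v\|_{D^{1,p}}=1$ the Nehari condition gives $t^{\gamma-p}=H_\lambda(v)/F(v)$, so $\Phi_\lambda(u)=-c_{p,\gamma}|H_\lambda(v)|^{\gamma/(\gamma-p)}/|F(v)|^{p/(\gamma-p)}$ (formula \eqref{kj3}). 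Finiteness of $\hat J_\lambda^+$ and boundedness of minimizing sequences therefore require a \emph{quantitative} bound $F(v)\le c_\mu<0$ on the normalized admissible set --- precisely the content of Lemma~\ref{lem2}, which itself uses $\lambda<\lambda^*$, $(F_\infty)$ and the compactness of $v\mapsto\int h|v|^p$; it is not a consequence of Sobolev embedding alone. Second, you invoke ``$\dist(\overline{L^-(\lambda)},\overline{B^+(\lambda)})>0$''. Corollary~\ref{cor} only gives that these two closed subsets of the unit sphere are disjoint; in an infinite-dimensional space disjoint closed sets need not be at positive distance, so the quantitative separation again has to be extracted by a weak-convergence argument of the type just described rather than read off from the corollary. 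Once these two points are supplied, the rest of your scheme (ruling out vanishing via \eqref{alpha2} and $f(\infty)<0$, keeping the weak limit out of $\mathcal{N}_\lambda^0$ by the definition of $\lambda^*$, and upgrading weak to strong convergence by a fibering comparison) is sound and is the argument of \cite{costa}.
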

Since  $\overline{L^-(\lambda)}\cap \overline{B^+(\lambda)}\neq\emptyset$ for $\lambda\ge\lambda^*$ the technique used in \cite{costa} no longer applies to prove existence of solutions, therefore, a new idea has to be introduced in order to study this case. We start with the case $\lambda=\lambda^*$. Let us introduce the subset of $E$ given by
\begin{equation*}
\Theta_{\lambda}^{+}:=\left\{u\in E\backslash\{0 \}:\int|\nabla u|^{p}-\lambda\int h|u|^{p}<0 \mbox{ and } \int f|u|^{\gamma}<0 \right\}.
\end{equation*}
By using the Fibering Method of Pohozaev \cite{yav1,pohozaev}, we have that for each $u\in \Theta_{\lambda}^{+}$, there exists a unique $s=s_{\lambda}^{+}(u)>0$ given by 
\begin{equation}\label{kj22}
s_{\lambda}^{+}(u)=\left[\frac{\int|\nabla u|^{p}-\lambda\int h|u|^{p}}{\int f|u|^{\gamma}} \right]^{\frac{1}{\gamma-p}},
\end{equation} 
such that $su\in\mathcal{N}_{\lambda}^{+}$. Hence, we have the following characterization
\begin{equation*}
\mathcal{N}_{\lambda}^{+}=\left\{su:s=s_{\lambda}^{+}(u) \mbox{ and } u\in\Theta_{\lambda}^{+} \right\}.
\end{equation*}
With the preceeding parametrization of the Nehari manifold we observe that  $J_{\lambda}^{+}:=\Phi_{\lambda}\mid_{\mathcal{N}_{\lambda}^{+}}:\mathcal{N}_{\lambda}^{+}\rightarrow\mathbb{R}$ is given by
\begin{equation}\label{kj3}
J_{\lambda}^{+}(u)=:\Phi_{\lambda}(s_{\lambda}^{+}(u)u)=-c_{p,\gamma}\frac{\left|\int|\nabla u|^{p}-\lambda\int h|u|^{p} \right|^{\frac{\gamma}{\gamma-p}}}{\left|\int f|u|^{\gamma} \right|^{\frac{p}{\gamma-p}}}, \quad u\in\Theta_{\lambda}^{+},
\end{equation}
where $c_{p,\gamma}=(\gamma-p)/p\gamma$. 

\begin{remark}\label{Jzeroh}Notice that $J_{\lambda}^{+}(u)$ is $0$-homogeneous on $\Theta_{\lambda}^{+}$, i.e., $J_{\lambda}^{+}(tu)=J_{\lambda}^{+}(u)$, for each $t>0$ and $u\in\Theta_{\lambda}^{+}$. For this reason, throughout the paper we consider normalized sequences. 
\end{remark}

By similar ideas used in \cite{yav1} we deduce the following technical Lemmas:

\begin{lemma}\label{yavdat}
	If $D_{v}J_{\lambda}^{+}(v)(\eta)=0$ for all $\eta\in E\backslash\{0\}$, then $s_{\lambda}^{+}(v)v$ is a weak solution of Problem \eqref{p}.
\end{lemma}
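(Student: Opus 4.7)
The plan is to exploit the factorization $J_\lambda^+(v) = \Phi_\lambda(s_\lambda^+(v)\,v)$ coming from the fibering map, and to combine the chain rule with the Nehari-type identity $\Phi_\lambda'(u)(u) = 0$ at $u := s_\lambda^+(v)v \in \mathcal{N}_\lambda^+$ in order to transfer the vanishing of $DJ_\lambda^+(v)$ onto the vanishing of $\Phi_\lambda'(u)$. The key point is that the fiber direction $v$ (which is a positive scalar multiple of $u$) is always tangent to $\mathcal{N}_\lambda^+$ at $u$, so the term in the chain rule that involves the derivative of $s_\lambda^+$ is forced to drop out automatically.

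First I would observe that by the explicit formula \eqref{kj22} and the fact that $v \in \Theta_\lambda^+$ keeps both $H_\lambda(v)$ and $F(v)$ strictly negative, the map $v \mapsto s_\lambda^+(v)$ is of class $C^1$ on $\Theta_\lambda^+$. Setting $u := s_\lambda^+(v)\,v$, the chain rule applied to $v \mapsto \Phi_\lambda(s_\lambda^+(v)\,v)$ yields
\[
DJ_\lambda^+(v)(\eta) = Ds_\lambda^+(v)(\eta)\,\Phi_\lambda'(u)(v) + s_\lambda^+(v)\,\Phi_\lambda'(u)(\eta), \qquad \eta \in E.
\]
Since $u \in \mathcal{N}_\lambda^+$ one has $\Phi_\lambda'(u)(u) = 0$, and because $v$ is a positive scalar multiple of $u$ this forces $\Phi_\lambda'(u)(v) = 0$ as well. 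Hence the first summand on the right is identically zero, and the identity reduces to $DJ_\lambda^+(v)(\eta) = s_\lambda^+(v)\,\Phi_\lambda'(u)(\eta)$. From the hypothesis $DJ_\lambda^+(v)(\eta) = 0$ for all $\eta$, together with $s_\lambda^+(v) > 0$, we conclude $\Phi_\lambda'(u)(\eta) = 0$ for every $\eta\in E$, so $u = s_\lambda^+(v)\,v$ is a weak solution of \eqref{p}.

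I would not expect any serious obstacle: the only delicate point is the smoothness of $s_\lambda^+$ needed to apply the chain rule, which is immediate from the closed form \eqref{kj22} on the open set $\Theta_\lambda^+$. A purely computational alternative is to differentiate the explicit expression \eqref{kj3} for $J_\lambda^+$ directly, reducing $DJ_\lambda^+(v)(\eta) = 0$ to the relation
\[
\gamma(-F(v))\,H_\lambda'(v)\eta \;=\; p(-H_\lambda(v))\,F'(v)\eta,
\]
dividing by $p\gamma(-F(v)) > 0$, and recognising via \eqref{kj22} that the coefficient $-H_\lambda(v)/(-F(v))$ appearing on the right-hand side equals $s_\lambda^+(v)^{\gamma-p}$, so that multiplying the resulting equation through by $s_\lambda^+(v)^{p-1}$ produces precisely the weak formulation $\Phi_\lambda'(s_\lambda^+(v)v)(\eta)=0$. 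Either route leads to the same conclusion.
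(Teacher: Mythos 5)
Your argument is correct and is exactly the standard fibering-method computation that the authors have in mind when they state this lemma without proof, referring only to ``similar ideas used in \cite{yav1}'': the chain rule applied to $J_{\lambda}^{+}(v)=\Phi_{\lambda}(s_{\lambda}^{+}(v)v)$, with the $Ds_{\lambda}^{+}$ term annihilated by the Nehari identity $\Phi_{\lambda}'(u)(u)=0$, and $s_{\lambda}^{+}(v)>0$ to conclude. Both of your routes (chain rule, or direct differentiation of \eqref{kj3} followed by rescaling via \eqref{kj22}) are valid and fill in the details the paper omits.
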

\begin{lemma}\label{de} The function $(\lambda_1,\lambda^*)\ni\lambda\mapsto\hat{J}_{\lambda}^{+}$ is decreasing.
\end{lemma}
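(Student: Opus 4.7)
The plan is to exploit two monotonicity facts simultaneously: the inclusion $\Theta_{\lambda}^{+}\subseteq \Theta_{\mu}^{+}$ for $\lambda_{1}<\lambda<\mu<\lambda^{*}$, and the pointwise strict decrease in $\lambda$ of the closed form \eqref{kj3} for $J_{\lambda}^{+}$, combined with the existence of a minimizer supplied by Theorem~\ref{exisbeforlam}.

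First, I would fix $\lambda_{1}<\lambda<\mu<\lambda^{*}$ and invoke Theorem~\ref{exisbeforlam} to pick $u_{\lambda}\in\mathcal{N}_{\lambda}^{+}$ with $\Phi_{\lambda}(u_{\lambda})=\hat{J}_{\lambda}^{+}$. From the characterization $\mathcal{N}_{\lambda}^{+}=\{H_{\lambda}(u)<0,\ F(u)<0\}$ it follows that $H_{\lambda}(u_{\lambda})<0$ and $F(u_{\lambda})<0$. Since $\int|\nabla u_{\lambda}|^{p}\geq 0$, the inequality $\int|\nabla u_{\lambda}|^{p}<\lambda\int h|u_{\lambda}|^{p}$ forces $\int h|u_{\lambda}|^{p}>0$.

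Next I would use this positivity to move $\lambda$ up to $\mu$:
\begin{equation*}
H_{\mu}(u_{\lambda})=H_{\lambda}(u_{\lambda})-(\mu-\lambda)\int h|u_{\lambda}|^{p}<H_{\lambda}(u_{\lambda})<0,
\end{equation*}
so $u_{\lambda}\in \Theta_{\mu}^{+}$ and $|H_{\mu}(u_{\lambda})|>|H_{\lambda}(u_{\lambda})|>0$. Since $\gamma>p$, the map $t\mapsto t^{\gamma/(\gamma-p)}$ is strictly increasing on $(0,\infty)$, and plugging into \eqref{kj3} with the unchanged denominator $|F(u_{\lambda})|^{p/(\gamma-p)}$ yields
\begin{equation*}
J_{\mu}^{+}(u_{\lambda})=-c_{p,\gamma}\frac{|H_{\mu}(u_{\lambda})|^{\gamma/(\gamma-p)}}{|F(u_{\lambda})|^{p/(\gamma-p)}}<-c_{p,\gamma}\frac{|H_{\lambda}(u_{\lambda})|^{\gamma/(\gamma-p)}}{|F(u_{\lambda})|^{p/(\gamma-p)}}=J_{\lambda}^{+}(u_{\lambda}).
\end{equation*}

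Finally I would close the argument by comparing to the infimum at level $\mu$. Because $u_{\lambda}\in\mathcal{N}_{\lambda}^{+}$, one has $s_{\lambda}^{+}(u_{\lambda})=1$ from \eqref{kj22}, so $J_{\lambda}^{+}(u_{\lambda})=\Phi_{\lambda}(u_{\lambda})=\hat{J}_{\lambda}^{+}$. On the other hand $s_{\mu}^{+}(u_{\lambda})u_{\lambda}\in\mathcal{N}_{\mu}^{+}$ with $\Phi_{\mu}(s_{\mu}^{+}(u_{\lambda})u_{\lambda})=J_{\mu}^{+}(u_{\lambda})$, so $\hat{J}_{\mu}^{+}\le J_{\mu}^{+}(u_{\lambda})<J_{\lambda}^{+}(u_{\lambda})=\hat{J}_{\lambda}^{+}$, which proves the strict decrease.

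I do not foresee a real obstacle; the only points needing care are checking that every element of $\Theta_{\lambda}^{+}$ automatically satisfies $\int h|u|^{p}>0$ (so that raising $\lambda$ strictly enlarges $|H_{\lambda}(u)|$), and using the minimizer from Theorem~\ref{exisbeforlam} rather than a mere infimum argument, since otherwise the strict pointwise inequality does not immediately transfer to the infimum.
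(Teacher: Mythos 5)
Your argument is correct and is essentially the paper's own proof: both rely on the inclusion $\Theta_{\lambda}^{+}\subset\Theta_{\mu}^{+}$, the minimizer supplied by Theorem~\ref{exisbeforlam}, and the strict pointwise inequality $J_{\mu}^{+}(u_{\lambda})<J_{\lambda}^{+}(u_{\lambda})$ obtained from \eqref{kj3}. You merely spell out the details the paper leaves implicit (in particular that $H_{\lambda}(u_{\lambda})<0$ forces $\int h|u_{\lambda}|^{p}>0$, which is what makes the decrease strict).
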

\begin{proof} Suppose that $\lambda<\lambda'$ and observe that $\Theta^+_\lambda\subset \Theta^+_{\lambda'}$. Choose $v_\lambda\in \Theta^+_\lambda$ (given by Theorem \ref{exisbeforlam}) such that $\hat{J}_\lambda=J_\lambda(v_\lambda)$. It follows that 
	\begin{equation*}
	\hat{J}_{\lambda'}\le J_{\lambda'}(v_\lambda)<J_\lambda(v_\lambda)=\hat{J}_\lambda,
	\end{equation*}
	which finishes the proof.
\end{proof}

\begin{lemma}\label{liminf}
	Under the assumptions of the Lemma \ref{l3} there holds
	\[
	J_{\lambda}(v)\leq \liminf_{n\rightarrow+\infty}J_{\lambda_{n}}(v_{n}).
	\]
\end{lemma}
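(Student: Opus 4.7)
The plan is to exploit the explicit quotient representation \eqref{kj3} of $J_\lambda$ on $\Theta_\lambda^+$, where both $-H_\lambda$ and $-F$ are strictly positive, and to derive the required semi-continuity from (a) weak lower semi-continuity of $u\mapsto\int|\nabla u|^p$, (b) complete continuity of $u\mapsto\int h|u|^p$ on $D^{1,p}$, and (c) the concentration identity \eqref{alpha2} coupled with hypothesis $(F_\infty)$. Since $J_\lambda<0$ on $\Theta_\lambda^+$, the inequality $J_\lambda(v)\le \liminf J_{\lambda_n}(v_n)$ is equivalent to
\begin{equation*}
\limsup_{n\to\infty}\frac{(-H_{\lambda_n}(v_n))^{\gamma/(\gamma-p)}}{(-F(v_n))^{p/(\gamma-p)}}\;\le\;\frac{(-H_\lambda(v))^{\gamma/(\gamma-p)}}{(-F(v))^{p/(\gamma-p)}},
\end{equation*}
so my job reduces to bounding the numerator from above and the denominator from below in the limit.

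For the numerator, I would note that since $h\in L^{N/p}(\mathbb{R}^N)\cap L^\infty(\mathbb{R}^N)$, the functional $u\mapsto\int h|u|^p$ is completely continuous on $D^{1,p}(\mathbb{R}^N)$ (exactly as invoked at the end of the proof of Lemma \ref{l1}), whence $\int h|v_n|^p\to\int h|v|^p$. Combined with $\lambda_n\to\lambda$ and weak lower semi-continuity of $u\mapsto\int|\nabla u|^p$, this gives $H_\lambda(v)\le\liminf H_{\lambda_n}(v_n)$, equivalently $-H_\lambda(v)\ge\limsup(-H_{\lambda_n}(v_n))$. For the denominator, I would apply \eqref{alpha2}: writing $F(v)+\alpha_\infty f(\infty)=\limsup F(v_n)$ and using $\alpha_\infty\ge 0$ and $f(\infty)<0$ from $(F_\infty)$, one obtains
\begin{equation*}
-F(v)\;=\;\liminf_{n\to\infty}(-F(v_n))+\alpha_\infty f(\infty)\;\le\;\liminf_{n\to\infty}(-F(v_n)).
\end{equation*}

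Assuming the setup of Lemma \ref{l3} forces $v\in\Theta_\lambda^+$ (so $-H_\lambda(v)>0$ and $-F(v)>0$), the elementary bound $\limsup(a_n/b_n)\le(\limsup a_n)/(\liminf b_n)$ for eventually positive sequences with strictly positive $\liminf$, together with the monotonicity of $x\mapsto x^{\gamma/(\gamma-p)}$ and $x\mapsto x^{p/(\gamma-p)}$ on $(0,\infty)$, yields the displayed inequality; multiplication by $-c_{p,\gamma}<0$ reverses it and produces the stated conclusion.

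The main obstacle is verifying $v\in\Theta_\lambda^+$, that is, $-F(v)>0$ and $-H_\lambda(v)>0$, which should be either a hypothesis of the preceding Lemma \ref{l3} or a consequence of its normalization. The decisive qualitative point is that mass escaping to infinity, quantified by $\alpha_\infty\ge 0$, cannot spoil the estimate because the sign condition $f(\infty)<0$ forces the correction $\alpha_\infty f(\infty)$ to push $-F$ in the favorable direction; without $(F_\infty)$, the denominator inequality would go the wrong way and the argument would collapse.
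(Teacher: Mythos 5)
Your argument is correct and follows essentially the same route as the paper: you bound $-F(v)\le\liminf(-F(v_n))$ via \eqref{alpha2} together with $(F_\infty)$, bound $-H_\lambda(v)\ge\limsup(-H_{\lambda_n}(v_n))$ via weak lower semicontinuity of the gradient term and complete continuity of $u\mapsto\int h|u|^p$, and combine with the elementary $\limsup/\liminf$ quotient inequality. The caveat you flag about needing $v\in\Theta_\lambda^+$ (so that $-F(v)>0$ and $-H_\lambda(v)>0$) is also implicit in the paper's proof, which invokes the lemma only after establishing this membership.
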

\begin{proof}
	In view of \eqref{alpha2} we have that $F(v)\geq \limsup_{n\rightarrow+\infty} F(v_{n})$. Thus, $-F(v)\leq \liminf_{n\rightarrow+\infty}(-F(v_{n}))$. Since $F(v),F(v_{n})<0$ one has $|F(v)|\leq \liminf_{n\rightarrow+\infty}|F(v_{n})|$. Hence, we have
	\begin{equation}\label{kj13}
	-c_{\gamma,p}\frac{|H_{\lambda}(v)|^{\frac{\gamma}{\gamma-p}}}{|F(v)|^{\frac{p}{\gamma-p}}}\leq -c_{\gamma,p}\frac{|H_{\lambda}(v)|^{\frac{\gamma}{\gamma-p}}}{\displaystyle\liminf_{n\rightarrow+\infty}|F(v_{n})|^{\frac{p}{\gamma-p}}}.
	\end{equation}
	Notice that
	\[
	-c_{\gamma,p}|H_{\lambda}(v)|^{\frac{\gamma}{\gamma-p}}\leq -c_{\gamma,p}\limsup_{n\rightarrow+\infty}|H_{\lambda_{n}}(v_{n})|^{\frac{\gamma}{\gamma-p}}, 
	\]
	which together with \eqref{kj13} implies that
	\begin{equation}
		J_{\lambda}(v)  \leq  -c_{\gamma,p}\frac{\displaystyle\limsup_{n\rightarrow+\infty}|H_{\lambda_{n}}(v_{n})|^{\frac{\gamma}{\gamma-p}}}{\displaystyle\liminf_{n\rightarrow+\infty}|F(v_{n})|^{\frac{p}{\gamma-p}}}
		 =  \liminf_{n\rightarrow+\infty}J_{\lambda_{n}}(v_{n}),
	\end{equation}
	and the proof is complete.
\end{proof}

Now, we consider the minimization problem
\begin{equation*}
\hat{J}_{\lambda}^{+}:=\min\left\{J_{\lambda}^{+}(v):v\in\Theta_{\lambda}^{+}\cap S \right\},
\end{equation*}
where $S:=\{u\in E: \|u\|=1 \}$.
\begin{theorem}\label{l3}
	Suppose the assumptions of Theorem~\ref{thm}, then there exists a minimizer $v_{\lambda^{*}}\in\Theta_{\lambda^{*}}^{+}$ of $\hat{J}_{\lambda^{*}}^{+}$ such that $u_{\lambda^{*}}:=s(v_{\lambda^{*}})v_{\lambda^{*}}$ is a weak solution to \eqref{p}.
 
\end{theorem}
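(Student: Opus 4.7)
The plan is to approximate $\lambda^*$ from below: fix a sequence $\lambda_n\nearrow\lambda^*$ with $\lambda_n\in(\lambda_1,\lambda^*)$, take the minimizers $v_n:=v_{\lambda_n}\in\Theta_{\lambda_n}^+\cap S$ produced by Theorem~\ref{exisbeforlam} (so that $J_{\lambda_n}^+(v_n)=\hat{J}_{\lambda_n}^+$), and extract, up to a subsequence, a weak limit $v_n\rightharpoonup v_{\lambda^*}$ in $E$. The endgame is to verify $v_{\lambda^*}\in\Theta_{\lambda^*}^+$, pass the liminf using Lemma~\ref{liminf}, identify $v_{\lambda^*}$ as a minimizer of $\hat{J}_{\lambda^*}^+$, and apply Lemma~\ref{yavdat} to obtain the solution $u_{\lambda^*}$.

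Matching the minimization levels is direct. Lemma~\ref{de} gives $\hat{J}_{\lambda_n}^+\le \hat{J}_{\lambda_0}^+<0$ for any fixed $\lambda_0\in(\lambda_1,\lambda^*)$ and $n$ large. Since $\int h|v_n|^p>0$ on $\Theta_{\lambda_n}^+$, one has $H_{\lambda^*}(v_n)<H_{\lambda_n}(v_n)<0$, so $v_n\in\Theta_{\lambda^*}^+$, and the strict monotonicity of \eqref{kj3} in $|H_\lambda|$ yields $\hat{J}_{\lambda^*}^+\le J_{\lambda^*}^+(v_n)<J_{\lambda_n}^+(v_n)=\hat{J}_{\lambda_n}^+$. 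Testing the other inequality with any $w\in\Theta_{\lambda^*}^+$, which sits in $\Theta_{\lambda_n}^+$ eventually and satisfies $J_{\lambda_n}^+(w)\to J_{\lambda^*}^+(w)$, closes $\hat{J}_{\lambda_n}^+\to\hat{J}_{\lambda^*}^+\in(-\infty,0)$. Nontriviality $v_{\lambda^*}\neq 0$ is handled in the spirit of Lemma~\ref{l1}: if $v_{\lambda^*}=0$, complete continuity of $u\mapsto\int h|u|^p$ (from $h\in L^{N/p}$) gives $\int h|v_n|^p\to 0$; together with $H_{\lambda_n}(v_n)<0$ and $\int h|v_n|^p>0$ this forces $\int|\nabla v_n|^p\to 0$, whence $\int|v_n|^\gamma\to 1$ and $\alpha_\infty=1$ in \eqref{alpha1}. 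But then $(F_\infty)$ and \eqref{alpha2} give $F(v_n)\to f(\infty)<0$ while $|H_{\lambda_n}(v_n)|\to 0$, so $J_{\lambda_n}^+(v_n)\to 0$, contradicting $\hat{J}_{\lambda_n}^+\le\hat{J}_{\lambda_0}^+<0$.

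The crux of the proof is showing $v_{\lambda^*}\in\Theta_{\lambda^*}^+$ strictly, namely $H_{\lambda^*}(v_{\lambda^*})<0$ and $F(v_{\lambda^*})<0$. Weak lower semicontinuity of the $p$-Dirichlet integral and complete continuity of the $h$-term yield $H_{\lambda^*}(v_{\lambda^*})\le\liminf H_{\lambda_n}(v_n)\le 0$, while \eqref{alpha2} with $(F_\infty)$ gives control on $F(v_{\lambda^*})$. To exclude the borderline $H_{\lambda^*}(v_{\lambda^*})=0$ or $F(v_{\lambda^*})\ge 0$, I would argue that, under either condition, the definition of $\lambda^*$ together with Corollary~\ref{cor1} and the argument of Lemma~\ref{l1} forces $v_{\lambda^*}$ to be a nonnegative minimizer of $\lambda^*$ with $F(v_{\lambda^*})=0$, so $v_{\lambda^*}\in\mathcal{N}_{\lambda^*}^0$. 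I would then exploit that $\Phi_{\lambda_n}(u_n)=J_{\lambda_n}^+(v_n)=\hat{J}_{\lambda_n}^+$ is bounded away from $0$ whereas $\Phi_{\lambda^*}$ vanishes on $\mathcal{N}_{\lambda^*}^0$; the decompositions \eqref{alpha1}--\eqref{beta1} together with $(F_\infty)$ pin the rates of $|H_{\lambda_n}(v_n)|\to 0$ and $|F(v_n)|\to 0$, and a careful comparison in \eqref{kj22}--\eqref{kj3} produces an incompatibility. The thickness hypothesis $(F_2)$ is needed here just as in Lemma~\ref{lagranextre} to rule out concentration of the would-be minimizer on $\operatorname{int}(\Omega_f^0)$. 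This rate-comparison step is the main technical obstacle.

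Once $v_{\lambda^*}\in\Theta_{\lambda^*}^+$ is secured, Lemma~\ref{liminf} yields $J_{\lambda^*}^+(v_{\lambda^*})\le\liminf J_{\lambda_n}^+(v_n)=\hat{J}_{\lambda^*}^+$; after normalizing to $S$ via the $0$-homogeneity of $J^+$ (Remark~\ref{Jzeroh}), the reverse inequality is the definition of $\hat{J}_{\lambda^*}^+$. Hence $v_{\lambda^*}$ attains the infimum, and Lemma~\ref{yavdat} gives that $u_{\lambda^*}:=s_{\lambda^*}^+(v_{\lambda^*})v_{\lambda^*}$ is a weak solution of \eqref{p} at $\lambda=\lambda^*$.
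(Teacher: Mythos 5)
Your architecture is exactly the paper's: approximate with $\lambda_n\uparrow\lambda^*$, take the minimizers $v_n$ from Theorem~\ref{exisbeforlam}, pass to a weak limit, show the limit lies in $\Theta_{\lambda^*}^+$, and conclude via Lemma~\ref{liminf} and Lemma~\ref{yavdat}. The nontriviality argument and the identification $\hat{J}_{\lambda_n}^+\to\hat{J}_{\lambda^*}^+$ are fine (indeed your closing step, $J_{\lambda^*}^+(v)\le\liminf J_{\lambda_n}^+(v_n)=\lim\hat{J}_{\lambda_n}^+=\hat{J}_{\lambda^*}^+$, is somewhat cleaner than the paper's $\varepsilon$--$\eta$ case analysis, and it yields $\hat{J}_{\lambda^*}^+>-\infty$ a posteriori, so you need not assert finiteness in advance). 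The problem is that the step you yourself flag as ``the main technical obstacle'' --- excluding the borderline case in which the weak limit lands in $\mathcal{N}_{\lambda^*}^0$ --- is precisely the heart of the proof, and you leave it as an unexecuted plan (``a careful comparison \ldots produces an incompatibility''). As written, the proof is not complete.

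For the record, here is how that step actually closes, so you can see the mechanism you were groping for. First, $F(v)\ge 0$ together with $H_{\lambda^*}(v)\le 0$ would make $v$ admissible for \eqref{lambdaex}, so by Corollary~\ref{cor1} the only possible degeneration is $F(v)=0$ and $H_{\lambda^*}(v)=0$. In that case $H_{\lambda_n}(v_n)\to 0$, and since Lemma~\ref{de} keeps $\hat{J}_{\lambda_n}^+=-c_{p,\gamma}|H_{\lambda_n}(v_n)|^{\frac{\gamma}{\gamma-p}}/|F(v_n)|^{\frac{p}{\gamma-p}}$ bounded away from $0$, one forces $F(v_n)\to 0$; then \eqref{alpha2} and $(F_\infty)$ give $\alpha_\infty=0$, hence $v_n\to v$ strongly in $L^\gamma$ and, with $H_{\lambda_n}(v_n)\to 0=H_{\lambda^*}(v)$, strongly in $E$. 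Passing to the limit in the Euler--Lagrange equation \eqref{kj4} and comparing with the equation that Lemma~\ref{lagranextre} (this is where $(F_2)$ enters) provides for the minimizer $v$ of $\lambda^*$, one concludes $s_{\lambda_n}^+(v_n)\to t_0\in(0,\infty)$. But then $J_{\lambda_n}^+(v_n)=-c_{p,\gamma}\,s_{\lambda_n}^+(v_n)^p\,|H_{\lambda_n}(v_n)|\to 0$, contradicting the lower bound away from $0$. The remaining case $H_{\lambda^*}(v)=0$ with $F(v)<0$ is then excluded because the same computation gives $0\le-\alpha_\infty f(\infty)=F(v)<0$. The ``rate comparison'' is thus not a delicate asymptotic matching but the single identity $J^+=-c_{p,\gamma}\,s^p\,|H|$ played against the monotonicity of $\lambda\mapsto\hat{J}_\lambda^+$; without writing this out, your proposal has a genuine gap at its central point.
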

\begin{proof}
	Indeed suppose that $\lambda_{n}\uparrow\lambda^{*}$ as $n\rightarrow+\infty$. In light of Theorem \ref{exisbeforlam}, for each $n\in\mathbb{N}$ there exists $v_{\lambda_{n}}\in\Theta_{\lambda_{n}}^{+}\cap S$ such that $J_{\lambda_{n}}^{+}(v_{\lambda_{n}})=\hat{J}_{\lambda_{n}}^{+}$ and 
	\begin{equation}\label{kj4}
	-\Delta_p v_{\lambda_{n}} -\lambda_{n} h(x)|v_{\lambda_{n}}|^{p-2}v_{\lambda_{n}} = s_{\lambda_{n}}^{+}(v_{\lambda_{n}})^{\gamma-p}f(x)|v_{\lambda_{n}}|^{\gamma-2}v_{\lambda_{n}}.
	\end{equation}
	Once $|v_{\lambda_n}|$ also satisfies $J_{\lambda_{n}}^{+}(|v_{\lambda_{n}}|)=\hat{J}_{\lambda_{n}}^{+}$, we can assume without loss of generality that $v_{\lambda_n}\ge 0$ for each $n$. Up to a subsequence, we may assume that $v_{\lambda_{n}}\rightharpoonup v$ weakly in $E$. Arguing as in the proof of Lemma~\ref{l1} one can deduce that $\|v_{\lambda_{n}}\|_{D^{1,p}}\rightarrow B>0$ as $n\rightarrow+\infty$. Thus, since $v_{\lambda_{n}}\in\Theta_{\lambda_{n}}^{+}$ it follows that 
	\[
	0<B\leq \lambda^{*}\int h|v|^{p},
	\]
	which implies that $v\neq0$.
	
	Now, let us prove that $v\in\Theta_{\lambda^{*}}^{+}$. Since $v_{n}\in\Theta_{\lambda_{n}}^{+}$ one has
	\begin{equation}\label{1}
	\int|\nabla v|^{p}-\lambda^{*}\int h|v|^{p}\leq \liminf_{n\rightarrow+\infty}\left(\int|\nabla v_{\lambda_{n}}|^{p}-\lambda_{n}\int h|v_{\lambda_{n}}|^{p}\right)\leq0.
	\end{equation}
	and from the definition of $\lambda^{*}$ we also have that
	\begin{equation}\label{2}
	\int f|v|^{\gamma}\leq  0.
	\end{equation}	
	Now, we suppose by contradiction that
	$
	\int f|v|^{\gamma}=0.
	$
	From Corollary \ref{cor1}, we conclude that $H_{\lambda^*}(v)=0$, that is	
	\begin{equation*}
	\int|\nabla v|^{p}-\lambda^{*}\int h|v|^{p}=0,
\end{equation*}	
	which implies from \eqref{1} that
	\begin{equation}\label{kj5}
	\lim_{n\rightarrow+\infty}\left[\int|\nabla v_{\lambda_{n}}|^{p}-\lambda_{n}\int h|v_{\lambda_{n}}|^{p} \right]=0.
	\end{equation}
	By using \eqref{kj3}, the fact that the function $(\lambda_1,\lambda^*)\ni\lambda\mapsto\hat{J}_{\lambda}^{+}$ is decreasing (see Lemma \ref{de}) and \eqref{kj5}, we conclude that
	\begin{equation}\label{kj6}
	\liminf_{n\rightarrow+\infty}\int f|v_{\lambda_{n}}|^{\gamma}=0.
	\end{equation}
	Thus, using \eqref{alpha2} we deduce that $\alpha_{\infty}=0$. Therefore, $v_{\lambda_{n}}\rightarrow v$ strongly in $L^{\gamma}(\mathbb{R}^{N})$ which jointly with \eqref{kj5} implies that $v_{\lambda_{n}}\rightarrow v$ strongly in $E$. In view of \eqref{kj4}, it follows that 
	\begin{equation}\label{lim}
	-\Delta_p v -\lambda^{*} h(x)|v|^{p-2}v = \lim_{n\rightarrow+\infty}s_{\lambda_{n}}^{+}(v_{\lambda_{n}})^{\gamma-p}f(x)|v_{\lambda_{n}}|^{\gamma-2}v_{\lambda_{n}}, \quad \mbox{in} \hspace{0,2cm} \mathbb{R}^{N}.
	\end{equation}
	In view of Lemma~\ref{lagranextre}, there exists $t_{0}>0$ such that $t_{0}v$ is a solution of Problem~\eqref{p}. Thus, one has
	 \begin{equation}\label{kj17}
	  -\Delta_{p}v-\lambda^{*}h|v|^{p-2}v=t^{\gamma-p}f|v|^{\gamma-2}v, \quad \mbox{in} \hspace{0,2cm} \mathbb{R}^{N}.
	 \end{equation}
	Combining \eqref{lim} and \eqref{kj17} we obtain
	 \[
	  \lim_{n\rightarrow+\infty}s_{\lambda_{n}}^{+}(v_{\lambda_{n}})^{\gamma-p}f(x)|v_{\lambda_{n}}|^{\gamma-2}v_{\lambda_{n}}=t^{\gamma-p}f|v|^{\gamma-2}v.
	 \]
	Since $v_{\lambda_{n}}\rightarrow v$ strongly in $L^{\gamma}(\mathbb{R}^{N})$ we conclude that $\lim_{n\rightarrow+\infty}s_{\lambda_{n}}^{+}(v_{\lambda_{n}})^{\gamma-p}=t^{\gamma-p}$. Therefore, $s_{\lambda^{*}}^{+}\in(0,+\infty)$. Hence, we observe from \eqref{kj22} and \eqref{kj3} that
	\[
	0>J_{\lambda_{n}}^{+}(v_{\lambda_{n}})=-c_{p,\gamma}s_{\lambda_{n}}^{+}(v_{\lambda_{n}})^{p}\left|\int|\nabla v_{\lambda_{n}}|^{p}-\lambda_{n}\int h|v_{\lambda_{n}}|^{p} \right|=o_{n}(1),
	\]
	which is not possible once $(\lambda_1,\lambda^*)\ni\lambda\mapsto\hat{J}_{\lambda}^{+}$ is decreasing. Therefore, $\int f|v|^{\gamma}<0$. 
	
	Finally, if 
	\[
	\int|\nabla v|^{p}-\lambda^{*}\int h|v|^{p}=0,
	\]
	then using \eqref{alpha2} and arguing as in \eqref{kj5} and \eqref{kj6} we deduce that
	\[
	0\leq -\alpha_{\infty}f(\infty)=\int f|v|^{\gamma}<0,
	\]
	which is not possible. Therefore, $v\in\Theta_{\lambda^{*}}^{+}$. In order to prove that $v_{\lambda_{n}}\rightarrow v$ strongly in $E$, suppose by contradiction that the strong convergence does not hold, thus from Lemma \ref{liminf} we obtain that $$J_{\lambda^*}^+(v)<\displaystyle\liminf_{n\to \infty}J^+_{\lambda_n}(v_n)=\displaystyle\liminf_{n\to \infty}\hat{J}^+_{\lambda_n}.$$ Observe that for sufficiently large $n$ there holds $v_{\lambda_n}\in \Theta_{\lambda_n}^+$. Moreover one can easily see that $J_{\lambda_n}^+(v)\to J_{\lambda^*}^+(v)$. It follows that 
	\begin{equation*}
	J_{\lambda^*}^+(v)<\liminf_{n\to \infty}\hat{J}^+_{\lambda_n}\le \lim_{n\to \infty}J_{\lambda_n}^+(v)\to J_{\lambda^*}^+(v),
	\end{equation*}
	which is a contradiction and therefore $v_{\lambda_{n}}\rightarrow v$ strongly in $E$. We conclude that
	\begin{equation}\label{kj10}
	\hat{J}_{\lambda_{n}}^{+}=J_{\lambda_{n}}^{+}(v_{\lambda_{n}})\rightarrow J_{\lambda^{*}}^{+}(v)\geq\hat{J}_{\lambda^{*}}^{+}, \quad \mbox{as} \hspace{0,2cm} \lambda_{n}\uparrow \lambda^{*}.
	\end{equation}
	We claim that $J_{\lambda^{*}}^{+}(v)=\hat{J}_{\lambda^{*}}^{+}$. Suppose by contradiction that $J_{\lambda^{*}}^{+}(v)>\hat{J}_{\lambda^{*}}^{+}$. Note that or $\hat{J}_{\lambda^{*}}^{+}=-\infty$ or $\hat{J}_{\lambda^{*}}^{+}\in(-\infty,0)$. Let us suppose the case $\hat{J}_{\lambda^{*}}^{+}=-\infty$, the other one is studied by a similar argument. In this case, for any $\eta>0$ there exists $w_{\eta}\in\Theta_{\lambda^{*}}^{+}$ such that $J_{\lambda^{*}}^{+}(w_{\eta})<J_{\lambda^{*}}^{+}(v)-\eta$. For given $\varepsilon>0$ there is $n_{1}\in\mathbb{N}$ such that
	\begin{equation}\label{kj11}
	|J_{\lambda_{n}}^{+}(w_{\eta})-J_{\lambda^{*}}^{+}(w_{\eta})|<\varepsilon, \quad \forall n\geq n_{1}.
	\end{equation}
	In view of \eqref{kj10} there exists $n_{2}\in\mathbb{N}$ such that
	\begin{equation}\label{kj12}
	|\hat{J}_{\lambda_{n}}^{+}-J_{\lambda^{*}}^{+}(v)|<\varepsilon, \quad \forall n\geq n_{2}.
	\end{equation}
	Thus, for $n\geq n_{0}:=\max\{n_{1},n_{2}\}$, it follows from \eqref{kj11} and \eqref{kj12} that
	\[
	J_{\lambda^{*}}^{+}(v)-\varepsilon<\hat{J}_{\lambda_{n}}^{+}\leq J_{\lambda_{n}}^{+}(w_{\eta})<J_{\lambda^{*}}^{+}(w_{\eta})+\varepsilon<J_{\lambda^{*}}^{+}(v)-\eta+\varepsilon.
	\]
	Since $\varepsilon$ and $\eta$ are arbitrary we get a contradiction. Therefore, $J_{\lambda^{*}}^{+}(v)=\hat{J}_{\lambda^{*}}^{+}$ and if $u_{\lambda^*}:=s_{\lambda^*}(v)v$, then from Lemma \ref{yavdat} the proof is complete.
	
\end{proof}

\section{First Solution for $\lambda>\lambda^*$}\label{s4}

In this section we prove existence of one positive solution to problem \eqref{p} when $\lambda>\lambda^*$. To this end we need to provide some estimates concerning the minimizers of $\hat{J}_{\lambda^*}^+$.
\begin{lemma}\label{separataion}
	There exists $\mu_{0}\in(\lambda_{1},\lambda^{*})$ such that each minimizer $v_{\lambda^{*}}\in\Theta_{\lambda^{*}}^{+}$ of $\hat{J}_{\lambda^{*}}^{+}$ satisfies
	\[
	\int |\nabla v_{\lambda^{*}}|^{p}-\mu_{0}\int h|v_{\lambda^{*}}|^{p}<0.
	\]
\end{lemma}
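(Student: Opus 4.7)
The plan is to argue by contradiction. If no such $\mu_0$ exists, I can find minimizers $v_n\in\Theta_{\lambda^*}^+$ of $\hat{J}_{\lambda^*}^+$ with $\int|\nabla v_n|^p/\int h|v_n|^p\to\lambda^*$; since $\int h|v_n|^p$ is bounded, equivalently $H_{\lambda^*}(v_n)\to 0^-$. By the $0$-homogeneity of $J_{\lambda^*}^+$ (Remark~\ref{Jzeroh}), and since $|v_n|$ is also a minimizer, I may normalize $\|v_n\|=1$ and assume $v_n\ge 0$, then pass to a subsequence $v_n\rightharpoonup v$ in $E$. The compactness of $u\mapsto\int h|u|^p$ on $E$ (a consequence of $h\in L^{N/p}(\mathbb{R}^N)$) gives $\int h|v_n|^p\to\int h|v|^p$. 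If $v=0$, the ratio condition forces $\int|\nabla v_n|^p\to 0$, hence $\alpha_\infty=1$ by \eqref{alpha1} and $\|v_n\|=1$, so $F(v_n)\to f(\infty)<0$ by \eqref{alpha2} and $(F_\infty)$; then \eqref{kj3} yields $J_{\lambda^*}^+(v_n)\to 0$, contradicting $J_{\lambda^*}^+(v_n)=\hat{J}_{\lambda^*}^+<0$. Hence $v\neq 0$ and $\int h|v|^p>0$.

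Next I analyze $F(v)$. The same ``$|H_{\lambda^*}(v_n)|\to 0$ while $|F(v_n)|$ is bounded below'' mechanism rules out $F(v)<0$, since then $F(v_n)\to F(v)+\alpha_\infty f(\infty)$ is strictly negative. If instead $F(v)>0$, weak lower semicontinuity together with $\int|\nabla v_n|^p=H_{\lambda^*}(v_n)+\lambda^*\int h|v_n|^p\to\lambda^*\int h|v|^p$ gives $\int|\nabla v|^p/\int h|v|^p\le\lambda^*$; combined with the definition of $\lambda^*$ this is an equality, so $v$ is a nonnegative minimizer of $\lambda^*$ with $F(v)>0$, and the Lagrange-multiplier argument in the proof of Lemma~\ref{l1} then makes $v$ an eigenfunction of \eqref{pe} with eigenvalue $\lambda^*>\lambda_1$, contradicting the uniqueness of the sign-preserving eigenfunction. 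So $F(v)=0$, whence Corollary~\ref{cor1} gives $H_{\lambda^*}(v)=0$ and $v\in\mathcal{N}_{\lambda^*}^0$. The constancy $J_{\lambda^*}^+(v_n)\equiv\hat{J}_{\lambda^*}^+$ in \eqref{kj3} together with $|H_{\lambda^*}(v_n)|\to 0$ now forces $|F(v_n)|\to 0$, so $\alpha_\infty=0$ by \eqref{alpha2} and $(F_\infty)$, hence $v_n\to v$ in $L^\gamma(\mathbb{R}^N)$; combined with $\int|\nabla v_n|^p\to\lambda^*\int h|v|^p=\int|\nabla v|^p$, I obtain $v_n\to v$ strongly in $E$.

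Finally, set $u_n:=s_{\lambda^*}^+(v_n)v_n\in\mathcal{N}_{\lambda^*}^+$, which is a weak solution of \eqref{p} by Lemma~\ref{yavdat}. The identity
\[
\Phi_{\lambda^*}(u_n)=c_{p,\gamma}H_{\lambda^*}(u_n)=c_{p,\gamma}\,s_{\lambda^*}^+(v_n)^p\,H_{\lambda^*}(v_n)=\hat{J}_{\lambda^*}^+<0
\]
(valid on $\mathcal{N}_{\lambda^*}$, where $H_{\lambda^*}=F$) together with $H_{\lambda^*}(v_n)\to 0^-$ forces $s_{\lambda^*}^+(v_n)\to+\infty$. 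The rescaled equation satisfied by $v_n$,
\[
-\Delta_p v_n-\lambda^* h|v_n|^{p-2}v_n=s_{\lambda^*}^+(v_n)^{\gamma-p}\,f|v_n|^{\gamma-2}v_n,
\]
tested against any $\phi\in C_c^\infty(\mathbb{R}^N)$ has left-hand side converging to $\int|\nabla v|^{p-2}\nabla v\cdot\nabla\phi-\lambda^*\int h|v|^{p-2}v\phi$ by the strong convergence in $E$, while the right-hand side carries the divergent factor $s_{\lambda^*}^+(v_n)^{\gamma-p}$; this forces $\int f|v|^{\gamma-2}v\,\phi=0$ for every $\phi$, whence $\supp(v)\subset\Omega_f^0$. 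Replaying the final step of the proof of Lemma~\ref{lagranextre}, hypothesis $(F_2)$ then yields $\lambda^*\le\lambda_1(\operatorname{int}(\Omega_f^+\cup\Omega_f^0))<\lambda_1(\operatorname{int}(\Omega_f^0))\le\lambda^*$, the desired contradiction. The main obstacle is the strong convergence $v_n\to v$ in the second paragraph; its two ingredients ($\alpha_\infty=0$ and $\beta_\infty=0$) both rely on the precise rate structure of \eqref{kj3} together with $(F_\infty)$ to control tails, after which the blow-up $s_{\lambda^*}^+(v_n)\to\infty$ and the $(F_2)$-contradiction proceed exactly as at the end of Lemma~\ref{lagranextre}.
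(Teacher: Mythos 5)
Your proof is correct and follows essentially the same path as the paper's: the contradiction hypothesis yields minimizers $v_n$ with $H_{\lambda^*}(v_n)\to 0^-$, a nontrivial weak limit $v$ that must minimize $\lambda^*$ with $F(v)=0$, then $\alpha_\infty=0$, strong convergence, and the blow-up $s_{\lambda^*}^+(v_n)\to\infty$ clashing with the limiting equation. The only difference is presentational: where the paper invokes Lemma~\ref{lagranextre} to force $s_{\lambda^*}^+(v_n)\to s\in(0,\infty)$, you unpack that step and run the $\supp(v)\subset\Omega_f^0$ / $(F_2)$ contradiction explicitly, which is the same mechanism made visible.
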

\begin{proof}
	Suppose by contradiction that for each $\mu\in (\lambda_1,\lambda^*)$, there exists $v_\mu\in \Theta_{\lambda^{*}}^{+}\cap S$ such that 
	\begin{equation}\label{tt}
	\hat{J}_{\lambda^{*}}^{+}=J_{\lambda^{*}}^{+}(v_\mu) \quad \mbox{and} \quad \int |\nabla v_\mu|^{p}-\mu\int h|v_\mu|^{p} \ge 0.
	\end{equation}
	It follows that there exist sequences $\mu_n\uparrow \lambda^*$ and $v_n\equiv v_{\mu_n}\in \Theta_{\lambda^{*}}^{+}\cap S$ satisfying \eqref{tt}. Observe that   
	\[
	 |H_{\lambda^*}(v_n)-H_{\mu_n}(v_n)|=\left|(\lambda^*-\mu_n)\int h|v_n|^p\right|\to 0, \quad as \hspace{0,2cm} n\to \infty.
	\]
    Therefore, $H_{\lambda^*}(v_n)\to 0$ as $n\to \infty$. We may assume, up to a subsequence, that $v_{n}\rightharpoonup v$ weakly in $E$. Arguing as in the proof of Lemma~\ref{l3} we conclude that $v\neq0$. Notice that
	\[
	0>\hat{J}_{\lambda^{*}}^{+}=J_{\lambda^{*}}^{+}(v_{n})=-c_{p,\gamma}\frac{\left|\int|\nabla v_{n}|^{p}-\lambda^{*}\int h|v_{n}|^{p} \right|^{\frac{\gamma}{\gamma-p}}}{\left|\int f|v_{n}|^{\gamma} \right|^{\frac{p}{\gamma-p}}},
	\]
	which implies that
	\begin{equation*}
	\lim_{n\rightarrow+\infty}\int f|v_{n}|^{\gamma}=0.
	\end{equation*}
	In view of \eqref{alpha2} we get
	\[
	\int f|v|^{\gamma}=-\alpha_{\infty} f(\infty)\geq0.
	\]
	Since $v$ is an admissible function to the minimizing problem \eqref{lambdaex}, it follows from Corollary \eqref{cor1} that $\alpha_{\infty}=0$. Thus,
	\[
	\int |\nabla v|^{p}-\lambda^{*}\int h|v|^{p}=0 \quad \mbox{and} \quad \int f|v|^{\gamma}=0,
	\]
	and hence $v_n\to v$ in $E$. From 
	\begin{equation*}
	-\Delta_p v_{n} -\lambda_{n} h(x)|v_{n}|^{p-2}v_{n} = s_{\lambda^*}^{+}(v_{n})^{\gamma-p}f(x)|v_{n}|^{\gamma-2}v_{n}, \forall\ n
	\end{equation*}
	and Lemma \ref{lagranextre} we must conclude that $s_{\lambda^*}^{+}(v_{n})\to s\in (0,\infty)$, however, since 
	\[
	0>\hat{J}_{\lambda^{*}}^{+}=J_{\lambda^{*}}^{+}(v_{n})=-c_{p,\gamma}s_{\lambda^{*}}^{+}(v_{n})^{p}\left|\int|\nabla v_{n}|^{p}-\lambda^{*}\int h|v_{n}|^{p} \right|,
	\]
	we infer that $s_{\lambda^*}^{+}(v_{n})\to \infty$, a contradiction.
\end{proof}
The idea behind Lemma \ref{separataion} is to separate the minimizers of $\hat{J}_{\lambda^*}^+$ from $\mathcal{N}_{\lambda^*}^0$. Once we have such a separation we can prove
\begin{lemma}\label{lem2}
	For each $\mu\in(\lambda_{1},\lambda^{*})$, there exists $c_{\mu}<0$ such that
	\[
	\int f|v|^{\gamma}\leq c_{\mu}, \quad \forall v\in\overline{\Theta}_{\mu}^{+}\cap S.
	\]
\end{lemma}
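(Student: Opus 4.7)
The plan is to argue by contradiction: assume there is $\mu\in(\lambda_1,\lambda^*)$ for which no such $c_\mu<0$ works. Then I can extract a sequence $(v_n)\subset \overline{\Theta_\mu^+}\cap S$ with $F(v_n)\to 0^-$. By the continuity of $H_\mu$ and $F$ on $E$, every element of the closure still satisfies $H_\mu(v_n)\le 0$ and $F(v_n)\le 0$. After extraction, $v_n\rightharpoonup v$ weakly in $E$, and my goal is to reach a contradiction by applying the definition \eqref{lambdaex} of $\lambda^*$ to the weak limit.

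The first and main obstacle will be to rule out $v=0$. If $v=0$, then by the weak-to-strong continuity of $u\mapsto\int h|u|^p$ (the same compactness used in the proof of Lemma~\ref{l1}, guaranteed by $h\in L^{N/p}\cap L^\infty$) we get $\int h|v_n|^p\to 0$. Combined with $H_\mu(v_n)\le 0$, this forces $\int|\nabla v_n|^p\to 0$, and the normalization $\|v_n\|=1$ together with the definition of the $E$-norm gives $\int|v_n|^\gamma\to 1$. Hence $\alpha_\infty=1$, and \eqref{alpha2} combined with $(F_\infty)$ yields $\limsup_n F(v_n)=f(\infty)<0$, contradicting $F(v_n)\to 0$. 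This is precisely where $(F_\infty)$ is indispensable, via the concentration identity at infinity.

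Once $v\neq 0$ is established, the rest is short. Weak lower semicontinuity of $\int|\nabla\cdot|^p$ together with the weak-to-strong continuity of $\int h|\cdot|^p$ yield $H_\mu(v)\le\liminf_n H_\mu(v_n)\le 0$, i.e.\ $\int|\nabla v|^p\le \mu\int h|v|^p$. If $\int h|v|^p\le 0$ this would force $\nabla v\equiv 0$ and hence $v=0$ (since $v\in L^\gamma(\mathbb R^N)$), a contradiction; so $\int h|v|^p>0$. By \eqref{alpha2}, $\limsup_n F(v_n)=0$, and $(F_\infty)$,
\begin{equation*}
F(v)=\limsup_n F(v_n)-\alpha_\infty f(\infty)=-\alpha_\infty f(\infty)\ge 0.
\end{equation*}
Therefore $v$ is admissible in \eqref{lambdaex}, whence $\int|\nabla v|^p/\int h|v|^p\ge\lambda^*$; this contradicts $\int|\nabla v|^p/\int h|v|^p\le\mu<\lambda^*$ obtained in the previous step, completing the argument.
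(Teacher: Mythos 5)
Your proof is correct and follows essentially the same route as the paper's: argue by contradiction with a sequence $(v_n)\subset\overline{\Theta}_{\mu}^{+}\cap S$ with $F(v_n)\to 0$, rule out $v=0$ via the compactness of $u\mapsto\int h|u|^p$, the normalization, and the concentration identity \eqref{alpha2} together with $(F_\infty)$, and then show the weak limit is admissible for \eqref{lambdaex} with Rayleigh quotient at most $\mu<\lambda^*$. The only difference is cosmetic — you spell out the $v\neq 0$ step that the paper delegates to the argument of Lemma \ref{l3}, and you bound the quotient by $\mu$ rather than deriving the strict inequality $H_{\lambda^*}(v)<0$; both yield the same contradiction.
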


\begin{proof}
	Suppose by contradiction that there exist $\mu\in (\lambda_{1},\lambda^{*})$ and a sequence $(v_{n})\subset \overline{\Theta}_{\mu}^{+}\cap S$ such that
	\begin{equation}\label{kj7}
	\lim_{n\rightarrow+\infty}\int f|v_{n}|^{\gamma}=0.
	\end{equation}
	Since $\|v_{n}\|=1$ we may assume up to a subsequence that $v_{n}\rightharpoonup v$ weakly in $E$. It follows from $(F_{\infty})$, \eqref{alpha2} and \eqref{kj7} that
	\begin{equation}\label{kj9}
	\int f|v|^{\gamma}=-\alpha_{\infty}f(\infty)\geq0.
	\end{equation}
	Arguing as in the proof of Lemma~\ref{l3} we conclude that $v\neq0$. Thus, one has
	\[
	\int |\nabla v|^{p}-\lambda^{*}\int h|v|^{p}\leq \liminf_{n\rightarrow+\infty}\left[\int |\nabla v_{n}|^{p}-\lambda\int h|v_{n}|^{p}+(\mu-\lambda^{*})\int h|v_{n}|^{p} \right]<0,
	\]
	which implies that
	\begin{equation}\label{kj8}
	\frac{\int |\nabla v|^{p}}{\int h|v|^{p}}<\lambda^{*}.
	\end{equation}
	Since $v\neq0$ and \eqref{kj9} holds, it follows that $v$ is an admissible function for the minimization problem \eqref{lambdaex}. Therefore, \eqref{kj8} contradicts the definition of $\lambda^{*}$ which finishes the proof.
\end{proof}

For given $\lambda\geq \lambda^{*}$ and $\mu\in(\lambda_{1},\lambda^{*})$ we introduce the following family of constrained minimization problems
\begin{equation}\label{hatj}
\hat{J}_{\lambda}^{+}(\mu):=\inf\left\{J_{\lambda}^{+}(v):v\in\Theta_{\mu}^{+}\cap S \right\}.
\end{equation}
In light of Lemma~\ref{lem2} one can conclude that $\hat{J}_{\lambda}^{+}(\mu)>+\infty$.

\begin{proposition}\label{p1}
	Let $\lambda\geq \lambda^{*}$ and $\mu\in(\lambda_{1},\lambda^{*})$. Then, there exists a minimizer $v_{\lambda}(\mu)$ of \eqref{hatj}.
\end{proposition}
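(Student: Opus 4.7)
The plan is the direct method of the calculus of variations: take a minimizing sequence for $\hat J_\lambda^+(\mu)$, extract a weak limit $v$, verify $v \in \Theta_\mu^+ \cap S$, and conclude.

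First, $\hat J_\lambda^+(\mu)$ is finite: any $v \in \Theta_\mu^+ \cap S$ provides $\hat J_\lambda^+(\mu) < 0$; Lemma \ref{lem2} yields $|F(v)| \geq |c_\mu| > 0$ for $v \in \overline{\Theta}_\mu^+\cap S$, and $|H_\lambda(v)|$ is uniformly bounded on $S$ (since $h \in L^{N/p} \cap L^\infty$), so $J_\lambda^+$ is bounded from below. Let $(v_n) \subset \Theta_\mu^+ \cap S$ be a minimizing sequence; after replacing $v_n$ by $|v_n|$ I may assume $v_n \geq 0$. Extracting a subsequence, $v_n \rightharpoonup v$ in $E$. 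To rule out $v = 0$, I would invoke the compactness of $u \mapsto \int h|u|^p$: if $v = 0$, then $\int h |v_n|^p \to 0$, and $H_\mu(v_n) < 0$ forces $\int |\nabla v_n|^p \to 0$, so $H_\lambda(v_n) \to 0$ while $|F(v_n)| \geq |c_\mu|$, giving $J_\lambda^+(v_n) \to 0$, contradicting $\hat J_\lambda^+(\mu) < 0$.

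Next I would show $H_\mu(v) \leq 0$ and $F(v) < 0$, following the template of Lemma \ref{l3} and Lemma \ref{lem2}. Weak lower semicontinuity of the $D^{1,p}$-norm combined with the compactness of the $h$-term gives $H_\mu(v) \leq \liminf H_\mu(v_n) \leq 0$; since $v \neq 0$ in $E$ we have $\int |\nabla v|^p > 0$, and $H_\mu(v) \leq 0$ then forces $\int h |v|^p > 0$. If $F(v) > 0$, then $v$ is admissible for \eqref{lambdaex} with Rayleigh quotient $\leq \mu < \lambda^*$, contradicting the definition of $\lambda^*$; if $F(v) = 0$, Corollary \ref{cor1} forces the Rayleigh quotient to be $\geq \lambda^* > \mu$, again contradicting $H_\mu(v) \leq 0$. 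An adaptation of Lemma \ref{liminf} to fixed $\lambda$ (with the monotonicity argument replaced by the direct use of \eqref{alpha2} and $f(\infty) < 0$) then yields $J_\lambda^+(v) \leq \liminf J_\lambda^+(v_n) = \hat J_\lambda^+(\mu)$.

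The main obstacle is the strict inequality $H_\mu(v) < 0$, needed to place $v$ inside $\Theta_\mu^+$ rather than on its boundary. If the strong $D^{1,p}$-convergence $v_n \to v$ fails, weak lower semicontinuity is strict and the inequality is automatic. If strong convergence does hold, I would argue by contradiction: assume $H_\mu(v) = 0$ and construct a direction $w \in E$ satisfying $\int h|v|^{p-2}vw = \int f|v|^{\gamma-2}vw = 0$ and $\int |\nabla v|^{p-2}\nabla v \cdot \nabla w > 0$. The perturbation $v - \epsilon w$ then preserves $\int h|v|^p$ and $F(v)$ to first order while strictly decreasing $H_\mu$, so that $|H_\lambda(v-\epsilon w)| > |H_\lambda(v)|$ and, by $0$-homogeneity of $J_\lambda^+$, $J_\lambda^+((v - \epsilon w)/\|v - \epsilon w\|) < J_\lambda^+(v)$, contradicting minimality. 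Such a $w$ can fail to exist only if $v$ weakly satisfies $-\Delta_p v = \mu h|v|^{p-2}v$; since $v \geq 0$, uniqueness of the first eigenfunction forces $\mu = \lambda_1$, contradicting $\mu > \lambda_1$. Setting $v_\lambda(\mu) := v$ completes the proof.
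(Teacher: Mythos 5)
The first three paragraphs of your argument match the paper's proof: a minimizing sequence in $\Theta_\mu^+\cap S$, non-vanishing of the weak limit $v$, the inequalities $H_\mu(v)\le 0$ and $F(v)<0$ via Corollary \ref{cor1}, and weak lower semicontinuity of $J_\lambda^+$ in the spirit of Lemma \ref{liminf}. Up to that point you have exactly what the paper proves, namely a minimizer $v\in\overline{\Theta}_\mu^+\cap S$ with $J_\lambda^+(v)=\hat J_\lambda^+(\mu)$; note that the paper deliberately stops there, which is why the set of minimizers $\mathcal{S}_\lambda(\mu)$ is defined over the \emph{closure} $\overline{\Theta}_\mu^+\cap S$ and why the boundary set $\mathcal{S}_\lambda^\partial(\mu)$ is introduced immediately afterwards.

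Your final paragraph, which tries to upgrade $H_\mu(v)\le 0$ to the strict inequality $H_\mu(v)<0$, contains a genuine gap and in fact aims at a statement that cannot hold for every $\mu\in(\lambda_1,\lambda^*)$: Proposition \ref{prop}(ii) asserts that $\mathcal{S}_\lambda^\partial(\mu^\lambda)\neq\emptyset$, i.e.\ for suitable $\mu$ the minimizer \emph{does} sit on the boundary $\{H_\mu=0\}$, and the whole point of Lemmas \ref{separataion} and \ref{aux1} is to rule this out only for the particular value $\mu_0$. The flaw in your perturbation argument is in the claim that a direction $w$ with $\int h|v|^{p-2}vw=\int f|v|^{\gamma-2}vw=0$ and $\int|\nabla v|^{p-2}\nabla v\cdot\nabla w>0$ can fail to exist only if $-\Delta_p v=\mu h|v|^{p-2}v$. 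What actually follows from the non-existence of such a $w$ is that the functional $w\mapsto\int|\nabla v|^{p-2}\nabla v\cdot\nabla w$ vanishes on the intersection of the two kernels, i.e.\ that
\begin{equation*}
-\Delta_p v = a\, h|v|^{p-2}v + b\, f|v|^{\gamma-2}v
\end{equation*}
weakly, for some multipliers $a,b\in\mathbb{R}$. You have no grounds to conclude $b=0$; a constrained minimizer lying on $\{H_\mu=0\}$ generically satisfies such an equation with $b\neq 0$ (compare the surjectivity discussion in Lemma \ref{lagranextre}, where excluding the degenerate alternative requires the extra hypothesis $(F_2)$, not just $\mu>\lambda_1$). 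So no contradiction with $\mu>\lambda_1$ arises and the strict inequality is not established. The good news is that the proposition, as it is used in the paper, does not need it: you should simply stop after showing $v\in\overline{\Theta}_\mu^+\cap S$ and $J_\lambda^+(v)=\hat J_\lambda^+(\mu)$.
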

\begin{proof}
	Let $(v_{n})\subset \Theta_{\mu}^{+}\cap S$ be a minimizing sequence of \eqref{hatj}, that is, $J_{\lambda}^{+}(v_{n})\rightarrow \hat{J}_{\lambda}^{+}(\mu)$, as $n\rightarrow+\infty$. Arguing as in the proof of Lemma~\ref{l3}, there exists $v\in E$, $v\neq0$, such that, up to a subsequence, $v_{n}\rightharpoonup v$ weakly in $E$. Since $(v_{n})\subset \Theta_{\mu}^{+}$ one has
	\begin{equation}\label{kj14}
	\int|\nabla v|^{p}-\mu\int h|v|^{p}\leq \liminf_{n\rightarrow+\infty}\left(\int|\nabla v_{n}|^{p}-\mu\int h|v_{n}|^{p}\right)\leq0.
	\end{equation}
	We claim that 
	\begin{equation}\label{kj15}
	\int f|v|^{\gamma}<0.
	\end{equation}
	In fact, let us suppose by contradiction that
	\begin{equation}\label{kj16}
	\int f|v|^{\gamma}\geq0.
	\end{equation}
	In view of \eqref{kj14} we have
	\[
	\frac{\int |\nabla v|^{p}}{\int h|v|^{p}}\leq \mu<\lambda^{*}.
	\]
	If \eqref{kj16} holds, then $v$ is an admissible function to the minimizing problem \eqref{lambdaex}, and we get a contradiction. Therefore, \eqref{kj15} holds. Hence, \eqref{kj14} and \eqref{kj15} imply that $v\in\overline{\Theta}_{\mu}^{+}$. It follows from Proposition \ref{liminf} that
	\[
	J_{\lambda}^{+}(v)\leq \liminf_{n\rightarrow+\infty}J_{\lambda}^{+}(v_{n})=\hat{J}_{\lambda}^{+}(\mu),
	\]
	which implies that $J_{\lambda}^{+}(v)=\hat{J}_{\lambda}^{+}(\mu)$, that is, $v:=v_{\lambda}(\mu)$ is a minimizer of \eqref{hatj}.
\end{proof}

Let us introduce the following sets:
\[
\mathcal{S}_{\lambda}(\mu):=\left\{v\in \overline{\Theta}_{\mu}^{+}\cap S:J_{\lambda}^{+}(v)=\hat{J}_{\lambda}^{+}(\mu) \right\},
\]
\[
\mathcal{S}_{\lambda}^{\partial}(\mu):=\left\{v\in\mathcal{S}_{\lambda}(\mu): \int|\nabla v|^{p}-\mu\int h|v|^{p}=0 \right\}.
\]

\begin{lemma}\label{aux1}
	Let $\lambda_{0}\geq \lambda^{*}$ and $\mu\in(\lambda_{1},\lambda^{*})$ be such that $\mathcal{S}_{\lambda_{0}}^{\partial}(\mu)=\emptyset$. Then, there exists $\varepsilon>0$ such that $\mathcal{S}_{\lambda}^{\partial}(\mu)=\emptyset$, for all $\lambda\in[\lambda_{0},\lambda_{0}+\varepsilon)$.
\end{lemma}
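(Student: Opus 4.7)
The plan is a proof by contradiction coupled with a refined lower-semicontinuity estimate. Assume the conclusion fails; then we find $\lambda_{n}\downarrow\lambda_{0}$ with $\lambda_{n}>\lambda_{0}$ and $v_{n}\in\mathcal{S}_{\lambda_{n}}^{\partial}(\mu)$, so that $\|v_{n}\|=1$, $H_{\mu}(v_{n})=0$, $F(v_{n})<0$, and $J_{\lambda_{n}}^{+}(v_{n})=\hat{J}_{\lambda_{n}}^{+}(\mu)$. Passing to a subsequence, $v_{n}\rightharpoonup v_{0}$ weakly in $E$. The goal is to show $v_{0}/\|v_{0}\|$ is a minimizer of $\hat{J}_{\lambda_{0}}^{+}(\mu)$, to deduce from the hypothesis that $H_{\mu}(v_{0})<0$ strictly, and then to use this strict sign to contradict a sharpened form of Lemma~\ref{liminf}.

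First I would establish the upper estimate $\limsup_{n}\hat{J}_{\lambda_{n}}^{+}(\mu)\le \hat{J}_{\lambda_{0}}^{+}(\mu)$ by testing against a minimizer of $\hat{J}_{\lambda_{0}}^{+}(\mu)$ provided by Proposition~\ref{p1} and using continuity of $\lambda\mapsto J_{\lambda}^{+}(w)$. Next, $v_{0}\neq 0$: otherwise the compactness of $u\mapsto\int h|u|^{p}$ combined with $\|v_{n}\|_{D^{1,p}}^{p}=\mu\int h|v_{n}|^{p}$ forces $\|v_{n}\|_{D^{1,p}}\to 0$, hence $\|v_{n}\|_{\gamma}\to 1$ and $\alpha_{\infty}=1$. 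Then~\eqref{alpha2} gives $F(v_{n})\to f(\infty)<0$, whereas $|H_{\lambda_{n}}(v_{n})|=(\lambda_{n}-\mu)\int h|v_{n}|^{p}\to 0$, so $J_{\lambda_{n}}^{+}(v_{n})\to 0$, contradicting $\hat{J}_{\lambda_{0}}^{+}(\mu)<0$. Mirroring the argument used in Proposition~\ref{p1}, one also gets $F(v_{0})<0$: if $F(v_{0})\ge 0$, then $v_{0}$ is admissible for~\eqref{lambdaex}, while weak lower semicontinuity of $H_{\mu}$ gives $\int|\nabla v_{0}|^{p}/\int h|v_{0}|^{p}\le\mu<\lambda^{*}$, contradicting Corollary~\ref{cor1}.

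With $v_{0}\neq 0$ and $F(v_{0})<0$, weak lower semicontinuity yields $H_{\mu}(v_{0})\le 0$, so $v_{0}/\|v_{0}\|\in\overline{\Theta}_{\mu}^{+}\cap S$. By Lemma~\ref{liminf} and the $0$-homogeneity of $J_{\lambda_{0}}^{+}$ (Remark~\ref{Jzeroh}),
\begin{equation*}
J_{\lambda_{0}}^{+}(v_{0}/\|v_{0}\|)=J_{\lambda_{0}}^{+}(v_{0})\le \liminf_{n\to\infty}J_{\lambda_{n}}^{+}(v_{n})=\liminf_{n\to\infty}\hat{J}_{\lambda_{n}}^{+}(\mu)\le \hat{J}_{\lambda_{0}}^{+}(\mu),
\end{equation*}
while admissibility forces $J_{\lambda_{0}}^{+}(v_{0}/\|v_{0}\|)\ge \hat{J}_{\lambda_{0}}^{+}(\mu)$. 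Hence $v_{0}/\|v_{0}\|$ is a minimizer of $\hat{J}_{\lambda_{0}}^{+}(\mu)$, and the standing hypothesis $\mathcal{S}_{\lambda_{0}}^{\partial}(\mu)=\emptyset$ forces $H_{\mu}(v_{0}/\|v_{0}\|)<0$, equivalently $H_{\mu}(v_{0})<0$.

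The main obstacle is closing the loop by exploiting the strict sign $H_{\mu}(v_{0})<0$ against the equalities $H_{\mu}(v_{n})=0$. By compactness, $|H_{\lambda_{n}}(v_{n})|=(\lambda_{n}-\mu)\int h|v_{n}|^{p}\to(\lambda_{0}-\mu)\int h|v_{0}|^{p}$, whereas
\begin{equation*}
|H_{\lambda_{0}}(v_{0})|=-H_{\mu}(v_{0})+(\lambda_{0}-\mu)\int h|v_{0}|^{p}>(\lambda_{0}-\mu)\int h|v_{0}|^{p},
\end{equation*}
the strictness coming precisely from $H_{\mu}(v_{0})<0$. Combined with $|F(v_{0})|\le\liminf_{n}|F(v_{n})|$ (a consequence of $(F_{\infty})$ and~\eqref{alpha2}, since $f(\infty)<0$), the explicit formula~\eqref{kj3} yields the strict inequality
\begin{equation*}
J_{\lambda_{0}}^{+}(v_{0})<\liminf_{n\to\infty}J_{\lambda_{n}}^{+}(v_{n})\le \hat{J}_{\lambda_{0}}^{+}(\mu),
\end{equation*}
which contradicts $J_{\lambda_{0}}^{+}(v_{0})\ge \hat{J}_{\lambda_{0}}^{+}(\mu)$ established in the previous step. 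The required $\varepsilon>0$ thus exists.
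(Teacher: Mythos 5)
Your proposal is correct, and it follows the paper's skeleton (contradiction, weak limit $v_0$ of the boundary minimizers, $v_0\neq 0$, $F(v_0)<0$, the $\limsup/\liminf$ sandwich identifying $v_0$ as a minimizer of $\hat{J}_{\lambda_0}^{+}(\mu)$) up to the final step, where you genuinely diverge. The paper closes the argument by asserting that the weak limit satisfies $H_{\mu}(v)=0$ and hence lies in $\mathcal{S}_{\lambda_0}^{\partial}(\mu)$, contradicting the hypothesis; but weak lower semicontinuity only gives $H_{\mu}(v)\le 0$, and the paper never establishes the strong $D^{1,p}$-convergence that would upgrade this to an equality. You instead use the hypothesis $\mathcal{S}_{\lambda_0}^{\partial}(\mu)=\emptyset$ to force the strict inequality $H_{\mu}(v_0)<0$, and then exploit the resulting strict gap $|H_{\lambda_0}(v_0)|>\lim_n|H_{\lambda_n}(v_n)|=(\lambda_0-\mu)\int h|v_0|^p$, together with $|F(v_0)|\le\liminf_n|F(v_n)|$, to get the strict inequality $J_{\lambda_0}^{+}(v_0)<\liminf_n\hat{J}_{\lambda_n}^{+}(\mu)\le\hat{J}_{\lambda_0}^{+}(\mu)$, contradicting minimality. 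This quantitative step is exactly the ingredient the paper leaves implicit (it is equivalent to showing that $H_{\mu}(v_0)<0$ cannot happen, i.e.\ that the convergence is strong), so your version actually repairs the weakest link of the published proof. Two minor remarks: your $\limsup_n\hat{J}_{\lambda_n}^{+}(\mu)\le\hat{J}_{\lambda_0}^{+}(\mu)$ should be run with $\epsilon$-almost minimizers taken in the open set $\Theta_{\mu}^{+}\cap S$ (a Proposition~\ref{p1} minimizer may a priori sit on the boundary), and both your argument and the paper's tacitly identify $\hat{J}_{\lambda_0}^{+}(\mu)$ with the infimum over $\overline{\Theta}_{\mu}^{+}\cap S$ when asserting $J_{\lambda_0}^{+}(v_0)\ge\hat{J}_{\lambda_0}^{+}(\mu)$ for the weak limit; this is the convention already built into the paper's definition of $\mathcal{S}_{\lambda}(\mu)$, so it is not a gap specific to your proof.
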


\begin{proof}
	Arguing by contradiction, let us suppose that for each $n\in\mathbb{N}$, there exist $\lambda_{n}\geq\lambda_{0}$ and $v_{n}:=v_{\lambda_{n}}^{+}(\mu)\in\mathcal{S}_{\lambda_{n}}^{\partial}(\mu)$. Moreover, suppose that $\lambda_{n}\rightarrow\lambda_{0}$ as $n\rightarrow+\infty$. Arguing as before, we may assume that, up to a subsequence, $v_{n}\rightharpoonup v$ weakly in $E$ and $v\neq0$. Arguing as in the proof of Proposition~\ref{p1} we conclude that $v\in\overline{\Theta}_{\mu}^{+}$. By using Poincar\'{e} inequality and Lemma~\ref{lem2}, we have that
	\[
	\left|(-J_{\lambda}^{+}(w))^{\frac{\gamma-p}{\gamma}}-(-J_{\lambda_{0}}^{+}(w))^{\frac{\gamma-p}{\gamma}} \right|=c_{p,\gamma}\frac{|\lambda-\lambda_{0}| \left|\int h|w|^{p} \right|}{\left|\int f|w|^{\gamma} \right|^{\frac{p}{\gamma}}}\leq c_{p,\gamma}\frac{|\lambda-\lambda_{0}|}{\lambda_{1}}\frac{1}{|c_{\mu}|^{\frac{p}{\gamma}}},
	\]
	for all $w\in\bar{\Theta}_{\mu}^{+}\cap S$.	In view of Proposition~\ref{liminf}, one has
	\begin{equation}
	J_{\lambda_{0}}^{+}(v)\leq \liminf_{n\rightarrow+\infty}J_{\lambda_{n}}^{+}(v_{n})=:\hat{J}^{+}<+\infty,
	\end{equation}
	for all $w\in\overline{\Theta}_{\mu}^{+}$ and $\lambda\geq \lambda_{1}$. Therefore, $J_{\lambda_{n}}^{+}\rightarrow J_{\lambda_{0}}^{+}(w)$ uniformly on $w\in\overline{\Theta}_{\mu}^{+}$, which implies that $\hat{J}^{+}=\hat{J}_{\lambda_{0}}^{+}(\mu)$. Thus, since $v\in\overline{\Theta}_{\mu}^{+}$, we conclude that $J_{\lambda_{0}}^{+}(v)=\hat{J}_{\lambda_{0}}^{+}(\mu)$. Hence, $v=v_{\lambda_{0}}(\mu)$ and
	\[
	\int|\nabla v|^{p}-\mu_{0}\int h|v|^{p}=0.
	\]
	Therefore, $v\in \mathcal{S}_{\lambda_{0}}^{\partial}(\mu)$ which is a contradiction and finishes the proof.
\end{proof}

Now, we are able to prove the existence of a positive solution to Problem \eqref{p} for $\lambda>\lambda^*$.

\begin{theorem}\label{first}
	There exists $\varepsilon>0$ such that for any $\lambda\in(\lambda^{*},\lambda^{*}+\varepsilon)$, Problem \eqref{p} admits a positive weak solution. 
\end{theorem}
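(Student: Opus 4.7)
The strategy is to use the restricted minimization problem $\hat{J}_\lambda^+(\mu_0)$ with $\mu_0 \in (\lambda_1, \lambda^*)$ chosen via Lemma \ref{separataion}, combined with the openness result Lemma \ref{aux1}, in order to produce a minimizer in the \emph{interior} $\Theta_{\mu_0}^+$ for every $\lambda$ slightly larger than $\lambda^*$. The $0$-homogeneity of $J_\lambda^+$ (Remark \ref{Jzeroh}) then turns such an interior minimizer into a free critical point of $J_\lambda^+$, which yields a weak solution of \eqref{p} via Lemma \ref{yavdat}.

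First I would apply Lemma \ref{separataion} to fix $\mu_0 \in (\lambda_1, \lambda^*)$ so that every minimizer of $\hat{J}_{\lambda^*}^+$ satisfies $H_{\mu_0}(\cdot) < 0$. Note that $\Theta_{\mu_0}^+ \subset \Theta_{\lambda^*}^+$, because $H_{\mu_0}(u) < 0$ forces $\int h |u|^p > 0$ and hence $H_{\lambda^*}(u) \leq (\mu_0 - \lambda^*)\int h|u|^p < 0$. Therefore $\hat{J}_{\lambda^*}^+(\mu_0) \geq \hat{J}_{\lambda^*}^+$, and the reverse inequality follows by testing with the minimizer supplied by Theorem \ref{l3}, which lies in $\Theta_{\mu_0}^+$ by the separation property. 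The next step is to verify $\mathcal{S}_{\lambda^*}^\partial(\mu_0) = \emptyset$: for $v \in \mathcal{S}_{\lambda^*}^\partial(\mu_0)$, $H_{\mu_0}(v) = 0$ gives $H_{\lambda^*}(v) = (\mu_0 - \lambda^*)\int h|v|^p < 0$; Lemma \ref{lem2} rules out $F(v) = 0$, so $F(v) < 0$ and thus $v \in \Theta_{\lambda^*}^+$ with $J_{\lambda^*}^+(v) = \hat{J}_{\lambda^*}^+(\mu_0) = \hat{J}_{\lambda^*}^+$; but then $v$ is a minimizer of $\hat{J}_{\lambda^*}^+$, contradicting Lemma \ref{separataion}.

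With $\mathcal{S}_{\lambda^*}^\partial(\mu_0) = \emptyset$ in hand, Lemma \ref{aux1} produces $\varepsilon > 0$ such that $\mathcal{S}_\lambda^\partial(\mu_0) = \emptyset$ for all $\lambda \in [\lambda^*, \lambda^* + \varepsilon)$. For any fixed $\lambda \in (\lambda^*, \lambda^* + \varepsilon)$, Proposition \ref{p1} delivers a minimizer $v_\lambda \in \overline{\Theta}_{\mu_0}^+ \cap S$ of $\hat{J}_\lambda^+(\mu_0)$, and since $v_\lambda \notin \mathcal{S}_\lambda^\partial(\mu_0)$ we conclude $H_{\mu_0}(v_\lambda) < 0$, i.e.\ $v_\lambda$ lies in the open set $\Theta_{\mu_0}^+ \subset E$. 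The $0$-homogeneity of $J_\lambda^+$ implies that minimizing on $\Theta_{\mu_0}^+ \cap S$ is equivalent to minimizing on the open cone $\Theta_{\mu_0}^+$, so that $v_\lambda$ is a free critical point of $J_\lambda^+$. By Lemma \ref{yavdat}, $u_\lambda := s_\lambda^+(v_\lambda) v_\lambda$ is a weak solution of \eqref{p}. Replacing $v_\lambda$ by $|v_\lambda|$ (which leaves $H_\lambda$, $F$, and $J_\lambda^+$ invariant and preserves admissibility) gives a nonnegative solution, and the strong maximum principle for the $p$-Laplacian then promotes it to a positive solution.

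The main obstacle is establishing $\mathcal{S}_{\lambda^*}^\partial(\mu_0) = \emptyset$: this is where Lemmas \ref{separataion} and \ref{lem2} must work together, the former ruling out boundary minimizers whose $H_{\mu_0}$-value vanishes while still sitting inside $\Theta_{\lambda^*}^+$, and the latter preventing boundary minimizers from escaping into the degenerate locus $F = 0$ where $J_\lambda^+$ is unbounded below. Once this rigidity at $\lambda = \lambda^*$ is secured, Lemma \ref{aux1} carries it over to a full right-neighborhood of $\lambda^*$, after which the argument becomes routine.
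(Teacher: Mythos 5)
Your proposal is correct and follows essentially the same route as the paper: fix $\mu_0$ via Lemma \ref{separataion}, deduce $\mathcal{S}_{\lambda^*}^{\partial}(\mu_0)=\emptyset$, propagate this to a right-neighborhood of $\lambda^*$ by Lemma \ref{aux1}, and combine Proposition \ref{p1} with Lemma \ref{yavdat} and the strong maximum principle. In fact you justify the step $\mathcal{S}_{\lambda^*}^{\partial}(\mu_0)=\emptyset$ (via the identity $\hat{J}_{\lambda^*}^{+}(\mu_0)=\hat{J}_{\lambda^*}^{+}$ and Lemma \ref{lem2}) in more detail than the paper, which states it without comment.
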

\begin{proof}
	In view of Lemma~\ref{separataion}, there exists $\mu_{0}\in(\lambda_{1},\lambda^{*})$ such that any minimizer $v_{\lambda^{*}}\in\Theta_{\lambda^{*}}^{+}$ of $\hat{J}_{\lambda^{*}}^{+}$ satisfies $H_{\mu_{0}}(v_{\lambda^{*}})<0$. Thus, we have $\mathcal{S}_{\lambda}^{\partial}(\mu_{0})=\emptyset$. Hence, it follows from Lemma~\ref{aux1} that there exists $\varepsilon>0$ such that $\mathcal{S}_{\lambda}^{\partial}(\mu_{0})=\emptyset$, for all $\lambda\in[\lambda^{*},\lambda^{*}+\varepsilon)$. In light of Proposition~\ref{p1}, for any $\lambda\in(\lambda^{*},\lambda^{*}+\varepsilon)$ there exists a minimizer of \eqref{hatj}, i.e., there exists $v_{\lambda}(\mu_{0})\in \Theta_{\mu_{0}}^{+}$ such that $J_{\lambda}^{+}(v_{\lambda}(\mu_{0}))=\hat{J}_{\lambda}^{+}(\mu_{0})$. Therefore, Lemma \ref{yavdat} implies that $u_{\lambda}:=s_{\lambda}^{+}(v_{\lambda}(\mu_{0}))v_{\lambda}(\mu_{0})$ is a weak solution of Problem \eqref{p} for $\lambda\in(\lambda^{*},\lambda^{*}+\varepsilon)$. Since $|u_{\lambda}|\in\Theta_{\mu_{0}}^{+}$ and $J_{\lambda}^{+}(u_{\lambda})=J_{\lambda}^{+}(|u_{\lambda}|)$, we may assume that $u_{\lambda}\geq0$ in $\mathbb{R}^{N}$. By using Strong Maximum Principle we conclude that $u_{\lambda}>0$ in $\mathbb{R}^{N}$. This finishes the proof. 
\end{proof}

\begin{remark}
	It is worthwhile to mention that the solution obtained in Proposition~\ref{first} may depend on the parameter $\mu\in(\lambda_{1},\lambda^{*})$. A natural question arises: What is the dependence of the parameter? By similar arguments to \cite[Corollary 3.4]{yavkay} one can deduce that at least locally the set of minimizers $\mathcal{S}_{\lambda}(\mu)$ does not depend on the parameter $\mu\in(\lambda_{1},\lambda^{*})$.
\end{remark}



\section{Second Solution for $\lambda>\lambda^*$}\label{s5}

In this Section we complete the proof of Theorem \ref{thm}. To this end we look for a second solution for Problem \eqref{p} when $\lambda>\lambda^*$. For this purpose, we adapt the ideas introduced in \cite[Section 4]{yavkay}. In fact, the Mountain Pass geometry is obtained by similar calculations and we omit the proof. The problem here is the lack of compactness inherit from the unbounded domain. For this reason, it is necessary to use new techniques in order to show that (P.-S.) sequences converge strongly to weak solutions. In view of Lemma~\ref{separataion}, there exists $\mu_{0}\in(\lambda_{1},\lambda^{*})$ such that any minimizer $v_{\lambda^{*}}$ of $\hat{J}_{\lambda^{*}}^{+}$ satisfies $H_{\mu_{0}}(v_{\lambda^{*}})<0$. 

Let $\varepsilon>0$ be the parameter obtained in Proposition~\ref{first} and $\lambda\in(\lambda^{*},\lambda^{*}+\varepsilon)$. We define
 \[
  \mu^{\lambda}:=\sup\left\{\mu\in(\mu_{0},\lambda^{*}):\hat{J}_{\lambda}^{+}(\mu)=\hat{J}_{\lambda}^{+}(\mu_{0}) \right\}.
 \]
Notice that if $\lambda\in(\lambda^{*},\lambda^{*}+\varepsilon)$, $\mu\in(\lambda_{1},\lambda^{*})$ and $v\in\mathcal{S}_{\lambda}^{\partial}(\mu^{\lambda})$, then $|v|\in\mathcal{S}_{\lambda}^{\partial}(\mu^{\lambda})$. For $\lambda\in(\lambda^{*},\lambda^{*}+\varepsilon)$, let $v_{\lambda}\in\mathcal{S}_{\lambda}^{\partial}(\mu^{\lambda})$ be a fixed nonnegative function and let $u_{\lambda}\in\Theta_{\mu_{0}}^{+}$ be the positive solution which has been obtained in Proposition~\ref{first}. Let us define
 \[
  c_{\lambda}:=\inf_{\eta\in\Gamma_{\lambda}}\max_{t\in[0,1]}\Phi_{\lambda}(\eta(t)),
 \]
where
 \[
  \Gamma_{\lambda}:=\left\{\eta\in C([0,1], E):\eta(0)=u_{\lambda}, \ \eta(1)=v_{\lambda} \right\}.
 \]
By the same ideas used in \cite{yavkay} we can obtain some auxiliary lemmas which imply the mountain pass geometry. We summarize the results in the following Proposition:

\begin{proposition}\label{prop}
	For any $\lambda\in(\lambda^{*},\lambda^{*}+\varepsilon)$, the following facts hold: 
	 \begin{itemize}
	 	\item[(i)] $\mu_{0}<\mu^{\lambda}<\lambda^{*}$;
	 	\item[(ii)] $\hat{J}_{\lambda}^{+}(\mu^{\lambda})=\hat{J}_{\lambda}^{+}(\mu_{0})$ and $\mathcal{S}_{\lambda}^{\partial}(\mu^{\lambda})\neq\emptyset$;
	 	\item[(iii)] There exists $j_{\lambda}$ such that $\Phi_{\lambda}\geq j_{\lambda}>\hat{J}_{\lambda}^{+}(\mu_{0})$, for all $u\in\partial\Theta_{\mu_{0}}^{+}$;
	 	\item[(iv)] For any $\eta\in\Gamma_{\lambda}$, there exists $t_{0}\in(0,1)$ such that $\eta(t_{0})\in\partial\Theta_{\mu_{0}}^{+}$;
	 	\item[(v)] There exists $\bar{\eta}\in\Gamma_{\lambda}$ such that $H_{\lambda^{*}}(\bar{\eta}(t))<c<0$, for all $t\in[0,1]$;
	 	\item[(vi)] $\hat{J}_{\lambda}^{+}(\mu_{0})<c_{\lambda}<0$.
	 \end{itemize}
\end{proposition}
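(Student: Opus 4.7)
The plan is to verify (i)--(vi) in sequence, adapting the mountain--pass setup of \cite{yavkay} to the present noncompact setting. Items (i)--(ii) describe the threshold $\mu^{\lambda}$, while (iii)--(vi) assemble the mountain--pass geometry joining the first critical point $u_{\lambda}$ (from Theorem~\ref{first}) to the boundary minimizer $v_{\lambda}$.

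For (i), the strict bound $\mu_{0}<\mu^{\lambda}$ will follow by a perturbation argument: pick a minimizer $v$ of $\hat{J}_{\lambda}^{+}(\mu_{0})$; since $\mathcal{S}_{\lambda}^{\partial}(\mu_{0})=\emptyset$ by Lemma~\ref{aux1}, one has $H_{\mu_{0}}(v)<0$ strictly, so continuity of $\mu\mapsto H_{\mu}(v)$ yields $v\in\Theta_{\mu}^{+}$ for $\mu$ slightly larger than $\mu_{0}$, forcing $\hat{J}_{\lambda}^{+}(\mu)\le J_{\lambda}^{+}(v)=\hat{J}_{\lambda}^{+}(\mu_{0})$; together with the (essentially) nonincreasing behavior of the infimum on the expanding family $\Theta_{\mu}^{+}$ this gives equality in a right--neighborhood of $\mu_{0}$. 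The strict upper bound $\mu^{\lambda}<\lambda^{*}$ will be obtained by contradiction: if $\mu^{\lambda}=\lambda^{*}$, extract $\mu_{n}\uparrow\lambda^{*}$ with minimizers $v_{n}\in\mathcal{S}_{\lambda}(\mu_{n})$ and repeat the compactness analysis of Theorem~\ref{l3}; using $(F_{\infty})$, Lemma~\ref{liminf} and Corollary~\ref{cor1}, the weak limit $v$ either becomes a minimizer of the unconstrained $\hat{J}_{\lambda^{*}}^{+}$ (hence lies in $\Theta_{\mu_{0}}^{+}$ by Lemma~\ref{separataion}, breaking the equality chain), or satisfies $H_{\lambda^{*}}(v)=F(v)=0$, which via the fiber formula \eqref{kj3} forces $J_{\lambda}^{+}(v_{n})\to 0$, incompatible with $\hat{J}_{\lambda}^{+}(\mu_{0})<0$. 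For (ii), left--continuity of $\mu\mapsto\hat{J}_{\lambda}^{+}(\mu)$ at $\mu^{\lambda}$ (same limsup/liminf argument) gives $\hat{J}_{\lambda}^{+}(\mu^{\lambda})=\hat{J}_{\lambda}^{+}(\mu_{0})$; Proposition~\ref{p1} then produces a minimizer $v$, and if $H_{\mu^{\lambda}}(v)<0$ strictly a perturbation past $\mu^{\lambda}$ would contradict the maximality in the definition of $\mu^{\lambda}$, hence $v\in\mathcal{S}_{\lambda}^{\partial}(\mu^{\lambda})$.

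For (iii)--(iv): points of $\partial\Theta_{\mu_{0}}^{+}$ satisfy either $H_{\mu_{0}}(u)=0$ or $F(u)=0$, and an explicit computation of $\Phi_{\lambda}$ along fibers (using Lemma~\ref{lem2} for a uniform negative upper bound on $F$ over $\overline{\Theta}_{\mu_{0}}^{+}\cap S$) yields the strict separation $j_{\lambda}>\hat{J}_{\lambda}^{+}(\mu_{0})$ claimed in (iii). For (iv), from $H_{\mu^{\lambda}}(v_{\lambda})=0$ and $\int h|v_{\lambda}|^{p}>0$ we get $H_{\mu_{0}}(v_{\lambda})=(\mu^{\lambda}-\mu_{0})\int h|v_{\lambda}|^{p}>0$, so $v_{\lambda}\notin\overline{\Theta_{\mu_{0}}^{+}}$; since $u_{\lambda}\in\Theta_{\mu_{0}}^{+}$, the continuity of $\eta$ then forces a crossing of $\partial\Theta_{\mu_{0}}^{+}$. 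For (v), at both endpoints $H_{\lambda^{*}}$ is strictly negative ($H_{\lambda^{*}}(u_{\lambda})<H_{\mu_{0}}(u_{\lambda})<0$ because $\int h|u_{\lambda}|^{p}>0$, and $H_{\lambda^{*}}(v_{\lambda})=(\mu^{\lambda}-\lambda^{*})\int h|v_{\lambda}|^{p}<0$ by (i)), so a continuous path $\bar\eta$ entirely inside $\{H_{\lambda^{*}}<c<0\}$ can be constructed by scaling/concatenation along the cone of nonnegative functions with $\int h|u|^{p}>0$. Finally, for (vi), the upper bound $c_{\lambda}<0$ follows from evaluating $\Phi_{\lambda}$ at the fibered maxima along $\bar\eta$: each is negative thanks to $H_{\lambda^{*}}(\bar\eta(t))<c<0$ combined with the control of $F$ furnished by Lemma~\ref{lem2}; the lower bound $c_{\lambda}>\hat{J}_{\lambda}^{+}(\mu_{0})$ is the combination of (iii) and (iv), since every path crosses $\partial\Theta_{\mu_{0}}^{+}$ where $\Phi_{\lambda}\ge j_{\lambda}>\hat{J}_{\lambda}^{+}(\mu_{0})$.

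The main obstacle will be the strict inequality $\mu^{\lambda}<\lambda^{*}$ in (i). The noncompactness of $\mathbb{R}^{N}$ forces careful use of $(F_{\infty})$ to track mass escape at infinity in the limit $\mu_{n}\uparrow\lambda^{*}$; in particular one has to distinguish and rule out both the scenario in which the weak limit becomes a minimizer of the unconstrained $\hat{J}_{\lambda^{*}}^{+}$ and the scenario in which it lands on $\mathcal{N}_{\lambda^{*}}^{0}$, the latter requiring the fiber--map behavior $s_{\lambda}^{+}(v_{n})\to\infty$ to be detected through the strict negativity of $\hat{J}_{\lambda}^{+}(\mu_{0})$.
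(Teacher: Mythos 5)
The paper itself gives no proof of this proposition: it is explicitly deferred to the arguments of \cite{yavkay}, so there is no in-text argument to compare against. Your sketch follows the right blueprint (items (iv), the endpoint computations in (v), and the lower bound in (vi) are argued correctly), but it contains a genuine gap at the heart of (i) and (ii). To show $\mu_{0}<\mu^{\lambda}$ you must prove $\hat{J}_{\lambda}^{+}(\mu)=\hat{J}_{\lambda}^{+}(\mu_{0})$ for $\mu$ in a right-neighborhood of $\mu_{0}$, and the only nontrivial direction is $\hat{J}_{\lambda}^{+}(\mu)\geq\hat{J}_{\lambda}^{+}(\mu_{0})$: since $\Theta_{\mu_{0}}^{+}\subset\Theta_{\mu}^{+}$, the inequality $\hat{J}_{\lambda}^{+}(\mu)\leq J_{\lambda}^{+}(v)=\hat{J}_{\lambda}^{+}(\mu_{0})$ that you derive is automatic from monotonicity and says nothing; combining it "with the nonincreasing behavior" is combining an inequality with itself. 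The fact that one minimizer $v$ of $\hat{J}_{\lambda}^{+}(\mu_{0})$ lies strictly inside $\Theta_{\mu_{0}}^{+}$ does not prevent the infimum from dropping when the admissible set is enlarged: the new competitors live in $\Theta_{\mu}^{+}\setminus\overline{\Theta}_{\mu_{0}}^{+}$ and could a priori have smaller $J_{\lambda}^{+}$. The hypothesis $\mathcal{S}_{\lambda}^{\partial}(\mu_{0})=\emptyset$ has to enter through a contradiction--compactness argument: if $\hat{J}_{\lambda}^{+}(\mu_{n})<\hat{J}_{\lambda}^{+}(\mu_{0})$ along $\mu_{n}\downarrow\mu_{0}$, the minimizers $v_{n}\in\mathcal{S}_{\lambda}(\mu_{n})$ must satisfy $H_{\mu_{0}}(v_{n})>0\geq H_{\mu_{n}}(v_{n})$, hence $H_{\mu_{0}}(v_{n})\to 0$, and the concentration-compactness machinery (Lemma~\ref{liminf}, $(F_{\infty})$, Lemma~\ref{lem2}) upgrades the weak limit to an element of $\mathcal{S}_{\lambda}^{\partial}(\mu_{0})$, which is the contradiction. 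The same flawed inference (``interior minimizer $\Rightarrow$ the equality persists past $\mu^{\lambda}$'') is what you use to get $\mathcal{S}_{\lambda}^{\partial}(\mu^{\lambda})\neq\emptyset$ in (ii), so that step needs the identical repair.

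Two smaller points. In (iii), the strict separation $j_{\lambda}>\hat{J}_{\lambda}^{+}(\mu_{0})$ cannot come from an ``explicit computation along fibers'': since $\partial\Theta_{\mu_{0}}^{+}\cap S\subset\overline{\Theta}_{\mu_{0}}^{+}\cap S$, one only gets $j_{\lambda}\geq\hat{J}_{\lambda}^{+}(\mu_{0})$ for free, and strictness is again the statement that no minimizing sequence concentrates on $\{H_{\mu_{0}}=0\}$, i.e.\ it requires the same compactness argument together with $\mathcal{S}_{\lambda}^{\partial}(\mu_{0})=\emptyset$. In (v), ``scaling/concatenation along the cone of nonnegative functions with $\int h|u|^{p}>0$'' is not enough: the sublevel set $\{H_{\lambda^{*}}<0\}$ is the negative cone of an indefinite functional and linear segments do not stay inside it; the construction in \cite{yavkay} uses the hidden convexity of $w\mapsto\int|\nabla w^{1/p}|^{p}$ along $t\mapsto\bigl((1-t)u_{\lambda}^{p}+t\,v_{\lambda}^{p}\bigr)^{1/p}$ (valid precisely because both endpoints are nonnegative), which gives $H_{\lambda^{*}}(\bar{\eta}(t))\leq(1-t)H_{\lambda^{*}}(u_{\lambda})+tH_{\lambda^{*}}(v_{\lambda})$; you should name this device, as it is the only reason the path exists.
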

\begin{remark} Note that condition $(vi)$ of Proposition \ref{prop} gives the desired mountain pass geometry to $\Phi_\lambda$ with respect to $c_\lambda$.
\end{remark}
We emphasize that the main problem here is to overcome the difficulty imposed by the lack of compactness. Precisely, it is not clear that the energy functional $\Phi_{\lambda}$ satisfies the Palais-Smale condition at level $c\in\mathbb{R}$, i.e., if (P.-S.) sequences admit a strong convergent subsequence. Now, we prove that if this fact holds then we have the existence of a positive solution with energy at a mountain pass level. 

\begin{theorem}\label{second}
	Let $\lambda\in(\lambda^{*},\lambda^{*}+\varepsilon)$ and suppose that $\Phi_\lambda$ satisfies the (P.-S.) condition at the level $c_\lambda$. Then, Problem \eqref{p} admits a positive weak solution $u_{\lambda}$ such that $\Phi_{\lambda}(u_{\lambda})=c_{\lambda}$. 
\end{theorem}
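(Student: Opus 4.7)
The plan is a direct application of the Ambrosetti--Rabinowitz Mountain Pass Theorem, whose geometric hypotheses have already been verified in Proposition~\ref{prop}. The endpoints $u_{\lambda}$ (the positive solution from Theorem~\ref{first}) and $v_{\lambda}\in\mathcal{S}_{\lambda}^{\partial}(\mu^{\lambda})$ of the admissible path class $\Gamma_{\lambda}$ both realize the energy level $\hat{J}_{\lambda}^{+}(\mu_{0})$; items (iii)--(iv) of Proposition~\ref{prop} force every continuous path from $u_{\lambda}$ to $v_{\lambda}$ to cross the barrier $\partial\Theta_{\mu_{0}}^{+}$ on which $\Phi_{\lambda}\ge j_{\lambda}>\hat{J}_{\lambda}^{+}(\mu_{0})$; and item (vi) provides the strict separation $\hat{J}_{\lambda}^{+}(\mu_{0})<c_{\lambda}<0$. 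Applying the deformation lemma in the standard form, I would extract a Palais--Smale sequence $(w_{n})\subset E$ with $\Phi_{\lambda}(w_{n})\to c_{\lambda}$ and $\Phi_{\lambda}'(w_{n})\to 0$ in $E^{*}$.

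Invoking the standing hypothesis that $\Phi_{\lambda}$ satisfies the (P.-S.) condition at level $c_{\lambda}$, a subsequence of $(w_{n})$ converges strongly in $E$ to some $\tilde{u}_{\lambda}$. Continuity of $\Phi_{\lambda}$ and $\Phi_{\lambda}'$ then yields $\Phi_{\lambda}(\tilde{u}_{\lambda})=c_{\lambda}$ and $\Phi_{\lambda}'(\tilde{u}_{\lambda})=0$, so $\tilde{u}_{\lambda}$ is a weak solution of~\eqref{p}. Since $c_{\lambda}>\hat{J}_{\lambda}^{+}(\mu_{0})$ strictly while the solution from Theorem~\ref{first} has energy exactly $\hat{J}_{\lambda}^{+}(\mu_{0})$, the mountain pass critical point $\tilde{u}_{\lambda}$ is distinct from the first solution already produced.

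To upgrade $\tilde{u}_{\lambda}$ to a positive function, I would exploit the symmetry $\Phi_{\lambda}(|u|)=\Phi_{\lambda}(u)$ valid for every $u\in E$, together with the continuity of $u\mapsto|u|$ in $E$: since both endpoints of $\Gamma_{\lambda}$ are nonnegative, the min-max class may be restricted to paths of nonnegative functions without altering $c_{\lambda}$, and a cone-preserving variant of the deformation lemma then produces a nonnegative Palais--Smale sequence, hence a nonnegative critical point. Alternatively, once $\tilde{u}_{\lambda}$ is known to be a critical point, one tests the equation against $\tilde{u}_{\lambda}^{-}:=\min\{\tilde{u}_{\lambda},0\}$; the resulting identity forces $\tilde{u}_{\lambda}^{-}$ either to vanish or to lie in $\mathcal{N}_{\lambda}$, and the energy comparison $c_{\lambda}<0$ together with the structure of $\mathcal{N}_{\lambda}^{\pm}$ rules out the nontrivial option. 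The strong maximum principle for the $p$-Laplacian then lifts $\tilde{u}_{\lambda}\ge 0$ to $\tilde{u}_{\lambda}>0$ in $\mathbb{R}^{N}$. The only genuinely delicate technical point in the whole scheme is the Palais--Smale condition, precisely where the lack of compactness coming from $\Omega=\mathbb{R}^{N}$ would usually bite; here it is bypassed by hypothesis, so the body of the argument reduces to a routine mountain pass invocation plus the positivity step.
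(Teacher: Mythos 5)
Your overall scheme --- mountain pass geometry from Proposition~\ref{prop}, extraction of a Palais--Smale sequence at level $c_{\lambda}$, and the standing hypothesis of the (P.-S.) condition to pass to a critical point with $\Phi_{\lambda}(u_{\lambda})=c_{\lambda}$ --- is exactly the route the paper takes, and that part is fine. The gap is in the positivity step. Your primary mechanism, a ``cone-preserving variant of the deformation lemma,'' is an unproven claim: for the functional $\Phi_{\lambda}$ associated with the $p$-Laplacian there is no standard pseudo-gradient deformation known to preserve the cone of nonnegative functions, so restricting $\Gamma_{\lambda}$ to nonnegative paths does not by itself yield a nonnegative Palais--Smale sequence. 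The paper circumvents this with a concrete tool: it takes a sequence of nonnegative almost-optimal paths $\eta_{n}$ (legitimate, since $\Phi_{\lambda}(|u|)=\Phi_{\lambda}(u)$, $u\mapsto|u|$ is continuous, and both endpoints are nonnegative) and invokes the quantitative minimax principle of Kuzin--Pohozaev \cite[Theorem E.5]{kuzin} to produce a Palais--Smale sequence $(u_{n})$ satisfying in addition $\inf_{t\in[0,1]}\|u_{n}-\eta_{n}(t)\|\to 0$. After the strong convergence $u_{n}\to u_{\lambda}$ furnished by the (P.-S.) hypothesis, $u_{\lambda}$ lies in the closed cone of nonnegative functions, and the strong maximum principle upgrades $u_{\lambda}\ge 0$ to $u_{\lambda}>0$. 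This localization of the Palais--Smale sequence near the nonnegative paths is the ingredient missing from your write-up.

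Your fallback argument does not repair the gap. Testing the equation with $\tilde{u}_{\lambda}^{-}$ only yields $H_{\lambda}(\tilde{u}_{\lambda}^{-})=F(\tilde{u}_{\lambda}^{-})$, i.e.\ $\tilde{u}_{\lambda}^{-}\in\mathcal{N}_{\lambda}\cup\{0\}$; but functions in $\mathcal{N}_{\lambda}^{+}$ have strictly negative energy, so the splitting $\Phi_{\lambda}(\tilde{u}_{\lambda})=\Phi_{\lambda}(\tilde{u}_{\lambda}^{+})+\Phi_{\lambda}(\tilde{u}_{\lambda}^{-})$ together with $c_{\lambda}<0$ does not exclude a nontrivial negative part; no estimate in the paper rules out sign-changing critical points at negative levels. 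Your observation that $c_{\lambda}>\hat{J}_{\lambda}^{+}(\mu_{0})$ separates the mountain pass solution from the one of Theorem~\ref{first} is correct and worth retaining, though the paper leaves it implicit in Proposition~\ref{prop}.
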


\begin{proof}
	Let $(\eta_{n})\subset C([0,1],E)$ be a sequence of paths such that
	 \[
	  \lim_{n\rightarrow+\infty}\max_{t\in[0,1]}\Phi_{\lambda}(\eta_{n}(t))=c_{\lambda}.
	 \]
	We may assume without loss of generality that $\eta_{n}$ is nonnegative in $\mathbb{R}^{N}$ for all $n\in\mathbb{N}$. For any $\epsilon>0$ consider the set
	 \[
	  \eta_{n,\epsilon}=\left\{u\in E:\inf_{t\in[0,1]}\|u-\eta_{n}(t)\|\leq\epsilon \right\}\cap K_{c_{\lambda},2\epsilon},
	 \]
	where
	 \[
	  K_{c_{\lambda},2\epsilon}=\left\{u\in E:|\Phi_{\lambda}(u)-c_{\lambda}|\leq 2\epsilon \right\}.
	 \]
	In view of \cite[Theorem E.5]{kuzin}, there exists a sequence $(u_{n})\subset E$ which satisfies
	 \begin{equation}\label{kj18}
	  \Phi_{\lambda}(u_{n})\rightarrow c_{\lambda}, \quad \Phi_{\lambda}^{\prime}(u_{n})\rightarrow0 \quad \mbox{and} \quad \inf_{t\in[0,1]}\|u_{n}-\eta_{n}(t)\|\rightarrow0, \quad \mbox{as } n\rightarrow+\infty.
	 \end{equation}
	By hypothesis, up to a subsequence, $u_{n}\rightarrow u_{\lambda}$ strongly in $E\backslash\{0\}$, $\Phi_{\lambda}(u_{\lambda})=c_{\lambda}$ and $\Phi_{\lambda}^{\prime}(u_{\lambda})=0$. Moreover, $u_{\lambda}\geq0$ in $\mathbb{R}^{N}$. Therefore, Strong Maximum Principle implies that $u_{\lambda}>0$ in $\mathbb{R}^{N}$, which finishes the proof.
\end{proof}

In view of the preceding Proposition, it remains to prove that $\Phi_{\lambda}$ satisfies the Palais-Smale condition. For this purpose, the hypothesis $(F_{\infty})$ plays a very important role in our technique. 

\begin{proposition}\label{la1c}	
	Suppose that $(u_n)\subset E\setminus \{0\}$ is a (P.-S.) sequence at level $c<0$, 
	i.e.
	\begin{equation}\label{kj111}
	\Phi_\lambda(u_n)\to c< 0,~~ \Phi'(u_n)\to 0,\ n\to \infty.
	\end{equation}
	Assume that $B$ is an open ball contained in $\Omega_f^0$. If $\lambda$ is not an eingenvalue of $-\Delta_p$ over $B$, then $(u_n)$ has a strong convergent subsequence with limit point $ 
	u_{\lambda}\in E\setminus \{0\}$ satisfying $\Phi_\lambda(u_{\lambda})=c$ and 
	$\Phi'_\lambda(u_{\lambda})=0$. 	
\end{proposition}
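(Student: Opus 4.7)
The plan is to establish boundedness of $(u_n)$ in $E$ (this is where the eigenvalue hypothesis on $B$ enters), then promote the weak convergence to strong convergence via the concentration-compactness at infinity encoded by $\alpha_\infty,\beta_\infty$, and finally read off $u_\lambda\ne 0$ from $c<0$.

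For boundedness, I would argue by contradiction: assume $\|u_n\|\to\infty$ and set $v_n:=u_n/\|u_n\|$, so (up to a subsequence) $v_n\rightharpoonup v$ in $E$. Forming the combinations $\Phi_\lambda(u_n)-\tfrac{1}{\gamma}\Phi_\lambda'(u_n)[u_n]$ and $\Phi_\lambda(u_n)-\tfrac{1}{p}\Phi_\lambda'(u_n)[u_n]$ and dividing by appropriate powers of $\|u_n\|$ yields $\int|\nabla v_n|^p-\lambda\int h|v_n|^p\to 0$ and $\int f|v_n|^\gamma\to 0$. The limit $v$ cannot be zero: otherwise the weak continuity of $u\mapsto\int h|u|^p$ (a consequence of $h\in L^{N/p}$) forces $\int|\nabla v_n|^p\to 0$ and hence $\int|v_n|^\gamma\to 1$ from $\|v_n\|=1$, but then the splitting \eqref{alpha2} applied to $(v_n)$ gives $\int f|v_n|^\gamma\to f(\infty)<0$ by $(F_\infty)$, contradicting $\int f|v_n|^\gamma\to 0$. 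Testing $\Phi_\lambda'(u_n)[\phi]/\|u_n\|^{\gamma-1}=o(1)$ against $\phi\in C_c^\infty(\mathbb{R}^N)$ and using $\|u_n\|^{p-\gamma}\to 0$ to kill the gradient and $h$-contributions produces $\int f|v|^{\gamma-2}v\phi=0$ for every $\phi$, so $fv\equiv 0$ and $\supp v\subset\overline{\Omega_f^0}$. For $\phi\in C_c^\infty(\Omega_f^0)$ the $f$-term vanishes identically; upgrading $\nabla v_n\to\nabla v$ a.e.\ on compact subsets of $\Omega_f^0$ via the standard $p$-Laplacian monotonicity argument applied to the test function $(v_n-v)\eta^p$ with $\eta\in C_c^\infty(\Omega_f^0)$, one passes to the limit in $\Phi_\lambda'(u_n)[\phi]/\|u_n\|^{p-1}=o(1)$ to obtain
\[
\int|\nabla v|^{p-2}\nabla v\,\nabla\phi-\lambda\int h|v|^{p-2}v\,\phi=0,\qquad\forall\,\phi\in C_c^\infty(\Omega_f^0).
\]
Restricting to $B\subset\Omega_f^0$ and invoking the hypothesis that $\lambda$ is not an eigenvalue of $-\Delta_p$ over $B$ then yields the desired contradiction and proves boundedness.

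Once $(u_n)$ is bounded, extract a subsequence with $u_n\rightharpoonup u_\lambda$ weakly in $E$ and $u_n\to u_\lambda$ a.e.\ and in $L^\gamma_{\mathrm{loc}}$; the same $p$-Laplacian monotonicity trick gives $\nabla u_n\to\nabla u_\lambda$ a.e., so $\Phi_\lambda'(u_\lambda)=0$. Attach $\alpha_\infty,\beta_\infty$ from \eqref{alpha}--\eqref{beta1} to $(u_n)$. The Brezis--Lieb type splittings \eqref{alpha1}--\eqref{beta1} together with the weak continuity of the $h$-term and $\Phi_\lambda'(u_\lambda)[u_\lambda]=0$ allow one to pass to the limit in $\Phi_\lambda(u_n)\to c$ and $\Phi_\lambda'(u_n)[u_n]\to 0$, yielding
\[
\Phi_\lambda(u_\lambda)+\frac{\beta_\infty}{p}-\frac{\alpha_\infty f(\infty)}{\gamma}=c,\qquad \beta_\infty-\alpha_\infty f(\infty)=0.
\]
Since $\alpha_\infty,\beta_\infty\ge 0$ and $f(\infty)<0$ by $(F_\infty)$, the second identity forces $\alpha_\infty=\beta_\infty=0$, so $u_n\to u_\lambda$ strongly in $E$ and $\Phi_\lambda(u_\lambda)=c$. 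Finally $u_\lambda\ne 0$, since $\Phi_\lambda(0)=0\ne c$.

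The delicate step is the boundedness argument: the limit $v$ of the normalised sequence solves the weighted eigenvalue equation on $\Omega_f^0$ without any a priori boundary control on $\partial B$, so closing the contradiction requires combining the equation with the facts that $v$ vanishes on $\{f\ne 0\}$ and that $\supp v\subset\overline{\Omega_f^0}$, possibly through a unique continuation argument, in order to produce an honest Dirichlet eigenfunction on $B$. The concentration-compactness step, by contrast, is clean because $(F_\infty)$ provides precisely the sign needed to annihilate $\alpha_\infty$ and $\beta_\infty$ simultaneously.
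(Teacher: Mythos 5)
Your proposal is correct and follows essentially the same route as the paper: boundedness by contradiction via the normalised sequence $v_n=u_n/\|u_n\|$ (showing $v\neq 0$ using $(F_\infty)$ together with the weak continuity of $u\mapsto\int h|u|^p$, that $\supp v\subset\overline{\Omega_f^0}$, and that $v$ solves the limiting weighted eigenvalue equation, contradicting the hypothesis on $B$), followed by local gradient convergence via $p$-Laplacian monotonicity to get $\Phi_\lambda'(u_\lambda)=0$ and the sign identity $\beta_\infty=\alpha_\infty f(\infty)$ to force $\alpha_\infty=\beta_\infty=0$ and hence strong convergence. The ``delicate step'' you flag at the end is treated no more carefully in the paper itself---it simply tests the limit equation against functions compactly supported in $B$ and declares $\lambda$ an eigenvalue of $-\Delta_p$ over $B$---so your argument matches the published proof both in substance and in that weak point.
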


\begin{proof} We claim that the sequence $(\|u_n\|)$ is bounded. Indeed, suppose on the contrary that, up to a subsequence, we have $\|u_n\|\to \infty$, as $n\to \infty$. Write $v_n=u_n/\|u_n\|$ and suppose without loss of generality that $v_n \rightharpoonup v$ weakly in $E$, $\int h|v_n|^p\to \int h| v|^p$ and $v_n\to v$ strongly in $L^\gamma_{loc}(\mathbb{R}^N)$. It follows from \eqref{kj111} that
	\begin{equation}\label{M1}
	\frac{H_\lambda(u_n)}{p}-\frac{F(u_n)}{\gamma}=c+o(1),
	\end{equation}
	and
	\begin{equation}\label{M2}
	\frac{H_\lambda(u_n)-F(u_n)}{\|u_n\|^p}=o(1).
	\end{equation}
	We first prove that $v\neq 0$. In fact, combine \eqref{M1} with \eqref{M2} to obtain
	\begin{equation}\label{M3}
	\frac{\gamma-p}{p\gamma}	\|u_n\|^{\gamma-p}F(v_n)=\frac{c+o(1)}{\|u_n\|^p}.
	\end{equation}
	Since $c<0$ we conclude that $F(v_n)<0$ for $n$ sufficiently large. From \eqref{M2} it follows that $\int |\nabla v_n|^p<\lambda\int h|v_n|^p$ for $n$ sufficiently large. If $v=0$ then $\int |\nabla v_n|^p\to 0$ as $n\to \infty$ and hence $\int |v_n|^\gamma\to 1$ as $n\to\infty$. From \eqref{alpha1} it follows that $\alpha_\infty=1$ and from \eqref{alpha2} we conclude that 
	\begin{equation*}
	\limsup_{n\to \infty}F(v_n)=f(\infty)<0,
	\end{equation*}
	which contradicts \eqref{M3}. Therefore, $v\neq0$. Now observe that
	\begin{equation}\label{M4}
	-\Delta_pv_n-\lambda h|v_n|^{p-2}v_n-\|u_n\|^{\gamma-p}f|v_n|^{\gamma-2}v_n=o(1).
	\end{equation}
	Since $(v_n)$ is bounded, we obtain from \eqref{M4} that $f|v_n|^{\gamma-2}v_n\to 0$, as $n\to \infty$. Thus, the support of $v$ is contained on $\Omega\setminus (\Omega^+\cup \Omega^-)$. Once $\|v_n-v\|$ is bounded, by choosing $v_n-v$ as test function in \eqref{M4}, we conclude that
	\begin{equation}\label{M5}
	\lim_{n\to \infty}\left[-\Delta_p v_n(v_n-v)-\lambda h|v_n|^{p-2}v_n(v_n-v)-\|u_n\|^{\gamma-p}f|v_n|^{\gamma-2}v_n(v_n-v)\right]= 0.
	\end{equation}
	Notice that $f|v_n|^{\gamma-2}v_nv=0$, for $n\in\mathbb{N}$. Thus, it follows from \eqref{M3} that $\|u_n\|^{\gamma-p}F(v_n)=o(1)$. In view of \eqref{M5} we have	
	\begin{equation}\label{M6}
	\lim_{n\to \infty}-\Delta_p v_n(v_n-v)=\lim_{n\to \infty}\left[\lambda h|v_n|^{p-2}v_n(v_n-v)-\|u_n\|^{\gamma-p}F(v_n)\right]=0.
	\end{equation}
	By using the estimates
	\begin{equation}\label{inequality}
	-\Delta_p (v_n-v)(v_n-v)\ge\left\{
	\begin{aligned}
	c_p\int |\nabla v_n-\nabla v|^p,  &&p\ge 2, \\
	c_p\int\frac{ |\nabla v_n-\nabla v|^2}{(|\nabla v_n|+|\nabla v|)^{p-2}}, &&1<p<2,
	\end{aligned}
	\right.
	\end{equation}	
	jointly with \eqref{M6}, we conclude that $v_n\to v$ strongly in $D^{1,p}(\mathbb{R}^{N})$. Thus, one has
	\begin{equation}\label{M7}
	-\Delta_p v-\lambda h|v|^{p-2}v=\lim _{n\to \infty}\|u_n\|^{\gamma-p} f|v_n|^{\gamma-2}v_n.
	\end{equation}	
	By taking $w\in E$ with compact support contained in $B$ as test function in \eqref{M7}, we conclude that $\lambda$ is an eigenvalue to $-\Delta_p$ over $B$, which is not possible. Therefore, $(\|u_n\|)$ is bounded. We may assume, without loss of generality, that $u_n\rightharpoonup u_{\lambda}$ weakly in $E$, $\int h|u_n|^p\to \int h|u_{\lambda}|^p$ and $u_n\to u_{\lambda}$ strongly in $L^p_{loc}(\mathbb{R}^N)$ and $L^\gamma_{loc}(\mathbb{R}^N)$. If $u_{\lambda}=0$, then from \eqref{M3} we get a contradiction and hence $u_{\lambda}\neq 0$. Hence, for each $\varphi\in C_0^\infty(\mathbb{R}^N)$ there holds
	\begin{equation*}
	\lim_{n\to \infty}\left[-\Delta_p u_n(\varphi(u_n-u_{\lambda}))-\lambda h|u_n|^{p-2}u_n(\varphi(u_n-u_{\lambda}))-f|u_n|^{\gamma-2}u_n(\varphi(u_n-u_{\lambda}))\right]= 0,
	\end{equation*}
	which implies that
	\begin{equation}\label{M8}
	\lim_{n\to \infty}-\Delta_p u_n\left[\varphi(u_n-u_{\lambda})\right]=0, \quad \forall\ \varphi\in C_0^\infty(\mathbb{R}^N).
	\end{equation}
	Observe that
	\begin{equation*}
	-\Delta_p u_n\left[\varphi(u_n-u_{\lambda})\right]=\int \varphi|\nabla u_n|^{p-2}\nabla u_n(\nabla u_n-\nabla u_{\lambda})+\int |\nabla u_n|^{p-2}\nabla u_n\nabla \varphi(u_n-u_{\lambda}).
	\end{equation*}
	Thus, one has
	\begin{equation}\label{M9}
	\lim_{n\to \infty}-\Delta_p u_n(\varphi(u_n-u_{\lambda}))=\lim_{n\to \infty}\int \varphi|\nabla u_n|^{p-2}\nabla u_n(\nabla u_n-\nabla u_{\lambda}),\quad \forall\ \varphi\in C_0^\infty(\mathbb{R}^N).
	\end{equation}
	We combine \eqref{inequality}, \eqref{M8} and \eqref{M9} to obtain that $|\nabla u_n|\to |\nabla u_{\lambda}|$ in $L^p_{loc}(\mathbb{R}^N)$ and hence
	\begin{equation*}
	\Phi_\lambda'(u_{\lambda})\varphi=\lim\Phi_\lambda'(u_n)\varphi=0,\quad \forall\ \varphi\in C_0^\infty(\mathbb{R}^N).
	\end{equation*}
	Since $C_0^\infty(\mathbb{R}^N)$ is dense in $E$, we conclude that $\Phi'_\lambda(u_{\lambda})=0$. Now we claim that $u_n\to u_{\lambda}$ in $E$. Indeed, from $\lim_{n\to \infty}\Phi'_\lambda(u_n)u_n=0$ we conclude from \eqref{alpha} and \eqref{alpha2} that 
	\begin{equation*}
	H_\lambda(u_{\lambda})+\beta_\infty=F(u_{\lambda})+\alpha_\infty f(\infty).
	\end{equation*}
	Once $\Phi'_\lambda(u_{\lambda})=0$ it follows that $\beta_\infty=\alpha_\infty f(\infty)$. Therefore, $\beta_\infty=\alpha_\infty=0$ which implies the strong convergence $u_n\to u_{\lambda}$ in $E$ and consequently $\Phi_\lambda(u_{\lambda})=c$.
\end{proof}
Now we prove the main result of this work
\begin{proof}[Proof of Theorem \ref{thm}] The inequality $\lambda^*>\lambda_1$ follows from Lemma \ref{l1}. The existence of $u_\lambda$ is a consequence of Theorems \ref{exisbeforlam}, \ref{l3} and \ref{first}. The second solution $w_\lambda$ follows from Theorem \ref{exisbeforlam} when $\lambda\in(\lambda_1,\lambda^*)$ and from Proposition \ref{la1c} combined with Theorem \ref{second} in the case where $\lambda\in(\lambda^*,\lambda^*+\varepsilon)$.
	
\end{proof}









\end{document}